\documentclass{article}
\usepackage[dvipsnames]{xcolor}
\usepackage{gastex}
\usepackage{amsmath}
\usepackage{lineno}
    \linenumbers
\usepackage{url}
\usepackage{theorem}
\newtheorem{theorem}{Theorem}
\newtheorem{lemma}[theorem]{Lemma}
\newtheorem{proposition}[theorem]{Proposition}
\newtheorem{corollary}[theorem]{Corollary}
{\theorembodyfont{\rmfamily}%
  \newtheorem{example}[theorem]{Example}
   }

\newenvironment{proof}{\noindent\textit{Proof.}}
{\QED\vskip\theorempostskipamount} 
\newenvironment{proofof}[1]{\noindent\textit{Proof
    \protect{#1}.}}
                       {\QED\vskip\theorempostskipamount}
\def\petitcarre{\vrule height4pt width 4pt depth0pt}
\def\QED{\relax\ifmmode\eqno{\hbox{\petitcarre}}\else{%
  \unskip\nobreak\hfil\penalty50\hskip2em\hbox{}\nobreak\hfil
  \petitcarre
  \parfillskip=0pt \finalhyphendemerits=0\par\smallskip}
  \fi}
\DeclareMathOperator{\Card}{Card}
\begin{document}
\title{Note on sets of first return}
\author{Val\'erie Berth\'e$^1$, Clelia De Felice$^3$, 
Francesco Dolce$^2$, Dominique Perrin$^2$,
  \\
Christophe  Reutenauer$^4$,
Giuseppina Rindone$^2$\\\\
$^1$CNRS, Universit\'e Paris 7,
$^2$Universit\'e Paris Est,\\ 
$^3$Universit\`a degli Studi di Salerno, $^4$Universit\'e du Qu\'ebec \`a Montr\'eal}
\maketitle

\tableofcontents
\begin{abstract}
We prove that for an alphabet $A$ with three letters,
 the set of first return to a given word
in a uniformly recurrent set satisfying the tree condition is a basis of the free group
on $A$.
\end{abstract}
\section{Introduction}
Tree sets are symbolic dynamical systems subject to a restriction
on the possible extensions of a given word. These sets
are introduced
in~\cite{BertheDeFeliceDolcePerrinReutenauerRindone2013}
as a common generalization of Sturmian sets and of regular
interval exchange sets.

We prove in this note that, for an alphabet $A$ with three letters,
 the set of first return to a given word
in a uniformly recurrent set satisfying the tree condition is a basis of the free group
on $A$.

In the first section, we prove some preliminary results concerning
bispecial words. We define extension graphs and tree sets.
In Section~\ref{sectionMainResult} we state our main result
(Theorem~\ref{proposition1}). The proof uses a classification
of possible Rauzy graphs. We end we some examples.
\section{Preliminaries}\label{sectionPreliminaries}
In this section, we first recall some definitions concerning words.
We give the definition of recurrent and uniformly recurrent
sets of words. (see~\cite{BerstelDeFelicePerrinReutenauerRindone2012} for a more detailed
presentation). We also give the definitions and basic
properties of tree sets. We prove some prelimary properties
of sets of first return words.
\subsection{Recurrent sets}
Let $A$ be a finite nonempty alphabet. All words considered below,
unless
stated explicitly, are supposed to be on the alphabet $A$.
We denote by $A^*$ the set of all words on $A$.
We denote by $1$ or by $\varepsilon$ the empty word.

For a set $X$ of words and a word $u$, we denote
\begin{displaymath}
u^{-1}X=\{v\in A^*\mid uv\in X\}
\end{displaymath}
the right \emph{residual} of $X$ with respect to $u$.
This notation should not be confused with the notation
for the inverse in the free group on $A$.

A set of words is said to be \emph{factorial} if it contains the
factors of its elements.

Let $F$ be a set of words on the alphabet $A$.
For a word $w\in F$, we denote
\begin{eqnarray*}
L(w)&=&\{a\in A\mid aw\in F\}\\
R(w)&=&\{a\in A\mid wa\in F\}\\
E(w)&=&\{(a,b)\in A\times A\mid awb\in F\}
\end{eqnarray*}
and further
\begin{displaymath}
\ell(w)=\Card(L(w)),\quad r(w)=\Card(R(w)),\quad e(w)=\Card(E(w)).
\end{displaymath}
A word $w$ is \emph{right-extendable} if $r(w)>0$,
\emph{left-extendable} if $\ell(w)>0$ and \emph{biextendable} 
if $e(w)>0$. A factorial set
$F$ is called \emph{right-essential}
(resp. \emph{left-essential}, resp. \emph{biessential}) if every word in $F$ is
right-extendable (resp. left-extendable, resp. biextendable).

A word is \emph{left-special} if $\ell(w)\ge 2$, \emph{right-special} if
$r(w)\ge 2$, 
\emph{bispecial} if it is both left-special and right-special.

A set of words $F$ is \emph{recurrent} if it is factorial and if for every
$u,w\in F$ there is a $v\in F$ such that $uvw\in F$. A recurrent set
$F\ne\{1\}$ is biessential.

A set of words $F$ is said to be \emph{uniformly recurrent} if it is
right-essential and if, for any word $u\in F$, there exists an integer $n\ge
1$
such that $u$ is a factor of every word of $F$ of length $n$.
A uniformly recurrent set is recurrent.
\subsection{Tree sets}\label{sectionTreeSets}
Let $F$ be a biessential set.
For  $w\in F$, we
 consider the \emph{extension graph}
of $w$ as the undirected graph $G(w)$ on the set of vertices
which is the disjoint union of
$L(w)$ and $R(w)$ with edges the pairs 
$(a,b)\in E(w)$.

Recall that an undirected graph is a tree if it is connected and acyclic.

We say that $F$ is a \emph{tree set} if it is biessential
and if for every word
$w\in F$, the graph $G(w)$ is a tree. A tree set 
has complexity $kn+1$ with $k=\Card(A\cap F)-1$
(see~\cite{BertheDeFeliceDolcePerrinReutenauerRindone2013},
Proposition 3.2).

Let $F$ be a set of words. For $w\in F$, and $U,V\subset A^*$,
let 
$U(w)=\{\ell\in U\mid \ell w\in F\}$
 and let $V(w)=\{r\in V\mid wr\in F\}$.
The \emph{generalized extension graph} of $w$ relative to
$U,V$ is the following undirected graph $G_{U,V}(w)$. The set of vertices is
made of two disjoint copies of $U(w)$ and $V(w)$.
 The edges are the pairs $(\ell,r)$
for $\ell\in U(w)$ and $r\in V(w)$
such that $\ell wr\in F$. The extension graph $G(w)$ defined previously
corresponds
to the case where $U,V=A$.

\begin{example}
Let $F$ be the Fibonacci set. Let $w=a$, $U=\{aa,ba,b\}$ and let
$V=\{aa,ab,b\}$. The graph $G_{U,V}(w)$ is represented in
Figure~\ref{figureStrongTree}.
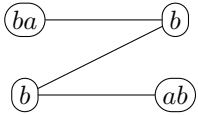
\begin{figure}[hbt]
\centering
\gasset{AHnb=0, Nadjust=wh}
\begin{picture}(20,10)
\node(b)(0,0){$b$}\node(ba)(0,10){$ba$}
\node(ab)(20,0){$ab$}\node(b')(20,10){$b$}

\drawedge(ba,b'){}\drawedge(b,b'){}\drawedge(b,ab){}
\end{picture}
\caption{The graph $G_{U,V}(w)$.}\label{figureStrongTree}
\end{figure}
\end{example}

The following property is proved in~\cite{BertheDeFeliceDolcePerrinReutenauerRindone2013d}.
It shows that in a tree set, not only
the extension graphs but all generalized extension graphs
are trees.
\begin{proposition}\label{PropStrongTreeCondition}
Let $F$ be a tree set.  For any $w\in F$, any
 finite 
$F$-maximal suffix code  $U\subset F$ 
and any  finite  $F$-maximal prefix code $V\subset F$,
 the generalized extension graph $G_{U,V}(w)$ is  a tree.
\end{proposition}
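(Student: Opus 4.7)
The plan is to proceed by induction on the complexity measure
$N(U,V) = \sum_{u \in U} |u| + \sum_{v \in V} |v|.$
The base case is $U = V = A \cap F$: since $F$ is biessential, this is a valid pair of $F$-maximal suffix and prefix codes, and $G_{U,V}(w)$ then coincides with the ordinary extension graph $G(w)$, which is a tree by hypothesis.

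For the inductive step, by the left--right symmetry of the tree condition (word reversal swaps the roles of $U$ and $V$), I assume $V \neq A \cap F$, so $V$ contains a word of length at least two. The prefix tree of $V$ then has an internal node $v_0$ of maximal depth; all children of $v_0$ must be leaves, i.e.\ $v_0 R(v_0) \subseteq V$ with $|v_0| \geq 1$ and $v_0 \notin V$. Set $V_0 = (V \setminus v_0 R(v_0)) \cup \{v_0\}$; a routine check shows that $V_0$ is again a finite $F$-maximal prefix code, with $N(U,V_0) < N(U,V)$. By the inductive hypothesis, both $G_{U,V_0}(w)$ and $H := G_{U, A \cap F}(wv_0)$ are trees.

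The core step is a local surgery exhibiting $G_{U,V}(w)$ as a modification of $G_{U,V_0}(w)$. If $wv_0 \notin F$, then $v_0$ is absent from $G_{U,V_0}(w)$ and no $v_0 a$ is a vertex of $G_{U,V}(w)$, so the two graphs coincide. If $wv_0 \in F$, then $v_0$ is a vertex of $G_{U,V_0}(w)$ whose neighbours form exactly the subset $U(wv_0) \subseteq U(w)$; one obtains $G_{U,V}(w)$ by deleting $v_0$ together with its incident edges and gluing the tree $H$ in its place, identifying the left vertex set $U(wv_0)$ of $H$ with the corresponding vertices of $U(w)$ in the remaining forest. Removing a vertex of degree $d$ from a tree produces a forest with exactly $d$ components, each containing one former neighbour; attaching a tree that meets each component in exactly one vertex then yields a connected acyclic graph, as a direct count of vertices and edges confirms.

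The main obstacle is the bookkeeping at the interface: one must verify that the $|U(wv_0)|$ former neighbours of $v_0$ land in distinct components of $G_{U,V_0}(w) \setminus \{v_0\}$ (which is automatic in a tree) and that identifying them with the left side of $H$ creates no cycle. This clean separation between the outer tree $G_{U,V_0}(w)$, the inner tree $H$, and the interface $U(wv_0)$ is what makes the induction close.
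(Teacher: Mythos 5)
The paper itself gives no proof of this proposition (it is quoted from the reference cited just above it), so there is nothing in-paper to compare against; your argument is, however, essentially the standard proof from that reference: induction on the total length of the words of $U\cup V$, pruning a deepest internal node $v_0$ of the trie of $V$ (where $F$-maximality of $V$ is what guarantees $v_0R(v_0)\subseteq V$ and that $V_0=(V\setminus v_0R(v_0))\cup\{v_0\}$ is again an $F$-maximal prefix code), and regrafting the tree $G_{U,A\cap F}(wv_0)$ in place of the vertex $v_0$ of $G_{U,V_0}(w)$. The surgery is described correctly and the vertex/edge count closes the induction, so, modulo the routine verifications you indicate (including that $U(wv_0)\neq\emptyset$, which follows from left-extendability and $F$-maximality of $U$, so the glued pieces really meet as claimed), the proof is sound and matches the known argument.
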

Note that the statement also holds under the (apparently) weaker
hypothesis that $U$ is a finite $Fw^{-1}$-maximal suffix code
and $V$ a finite $w^{-1}F$-maximal prefix code. Indeed, let then
$U'$ be a finite $F$-maximal suffix code containing $U$
and $V'$ be a finite $F$-maximal prefix code containing $V$.
Then $U'(w)=U(w)$ and $V'(w)=V(w)$. Thus $G_{U,V}(w)=G_{U',V'}(w)$
and thus $G_{U,V}(w)$ is a tree.
\subsection{Sets of first right return words}
For $x\in F$, let $\Gamma_F(x)=\{r\in F\mid xr\in F\cap A^+x\}$ 
be the set of \emph{right return words} to $x$ and let
$R_F(x)=\Gamma_F(x)\setminus\Gamma_F(x) A^+$ be the 
\emph{set of first right  return words} to $x$.
A word $x$ is \emph{unioccurrent} in a word $y$ if there exist unique
words $u,v$ such that $y=uxv$.

A set $F$ is \emph{periodic} if it is the set of factors of $w^*$
for some word $w$. If $\Card(A)\ge 2$, a set satisfying the
tree condition is not periodic since otherwise its complexity
would be bounded by a constant. 
\begin{proposition}\label{propositionBispecial}
Let $F$ be a recurrent set which is not periodic.
Any word $x\in F$ is a factor of a bispecial word $y$. There is a unique
shortest bispecial word $y$ containing $x$ and $x$ is unioccurrent
in $y$. If $y=uxv$, one has
$R_F(x)=vR_F(y)v^{-1}$.
\end{proposition}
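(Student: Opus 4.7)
The plan is to build $y$ by first extending $x$ to the right, and then to the left, using the tree structures of one-sided extensions. Consider the prefix-closed set $V(x) = \{w \in A^* \mid xw \in F\}$. Since $F$ is recurrent and non-periodic, $V(x)$ cannot reduce to the prefixes of a single right-infinite word: otherwise, taking (by recurrence) some $w$ with $xwx \in F$, the unique right-infinite extension $\alpha$ would satisfy $\alpha = wx\alpha$ and hence be periodic, which would force $F$ to equal the set of factors of $(wx)^*$, contradicting non-periodicity. Hence there is a unique shortest $v$ with $xv$ right-special, and by the tree structure $v$ is a prefix of every $v'$ with $xv'$ right-special. Next I extend $xv$ to the left along the unique branch of $U(xv) = \{u' \in A^* \mid u'xv \in F\}$: as long as the current word $u'xv$ is not left-special, the unique left-extension preserves the full right-extension set, since each $b \in R(xv)$ has $xvb$ left-extendable, and non-left-speciality of $u'xv$ forces this left-extension to be the same unique letter. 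By the analogous non-periodicity argument the process terminates, yielding a shortest $u$ with $uxv$ left-special; by the preservation of right extensions, $y = uxv$ is bispecial.

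For uniqueness of the shortest bispecial containing $x$: any bispecial $y' = pxq \in F$ containing $x$ satisfies $R(y') \subseteq R(xq)$, so $xq$ is right-special and $v$ is a prefix of $q$; symmetrically $u$ is a suffix of $p$, whence $|y'| \ge |uxv|$ with equality only if $y' = y$. For unioccurrence: if $x$ occurred in $y$ at a second position $|u'| > |u|$, writing $y = u'xv'$ gives $|v'| < |v|$, yet $R(y) \subseteq R(xv')$ would make $xv'$ right-special and force $v$ to be a prefix of $v'$, a contradiction.

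For $R_F(x) = v R_F(y) v^{-1}$, I first show every $s \in R_F(y)$ ends with $v$ and every $r \in R_F(x)$ starts with $v$. For $s$: if $|s| < |v|$, the two occurrences of $y$ in $ys$ overlap to produce a border of $y$ of length $> |u|+|x|$; pulling back the occurrence of $x$ sitting inside this border yields a second occurrence of $x$ in $y$, contradicting unioccurrence. For $r$: if $r$ does not start with $v$, then $r \in V(x)$ is a strict prefix of $v$ by the tree structure of $V(x)$, so $xr$ embeds in $y$ at position $|u|$, and the requirement that $xr$ end in $x$ produces a second occurrence of $x$ in $y$. Writing $s = s'v$ and $r = vr'$, the correspondence $r' = s'$ (so $r = vsv^{-1}$) is the required bijection, since the identity $ys = u \cdot xr \cdot v$ makes $ys \in F \Leftrightarrow xr \in F$, and the terminal-$y$ condition for $ys$ equivalent to the terminal-$x$ condition for $xr$.

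The main obstacle is the transfer of the ``no internal occurrence'' condition. If $x$ occurred at an interior position $p^*$ of $xr$, then the tree uniqueness in $V(x)$ at depth $|v|$ would force the $|v|$ letters to the right of this $x$ in $ys$ to be $v$, and the tree uniqueness in $U(xv)$ at depth $|u|$ would force the $|u|$ letters to the left to be $u$. This produces an interior occurrence of $y$ in $ys$ and contradicts $s \in R_F(y)$. The reverse implication is symmetric, completing the bijection.
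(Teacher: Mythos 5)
Your proposal is correct in substance and follows the same overall strategy as the paper (minimal special one-sided extensions, the ``chain'' structure of extension sets below the first branching, unioccurrence, and then conjugation of return words), but several ingredients differ. Where the paper obtains right-special extensions via Rauzy graphs (Lemma~\ref{lemma1}: if every vertex of $G_n$ had out-degree one, $G_n$ would be a cycle and $F$ periodic), you argue directly that the prefix-closed set $V(x)$ cannot be a single infinite chain, using recurrence to get $xwx\in F$ and the relation $\alpha=wx\alpha$; this is a correct, Rauzy-graph-free alternative. Where the paper takes $u$ minimal with $ux$ left-special and $v$ minimal with $xv$ right-special and proves bispeciality of $uxv$ by the induction of Lemma~\ref{lemma2} (left extensions survive right extension), you fix the minimal $v$ first and then extend to the left while right extensions are preserved; this is the mirror image of the same induction and produces the same word. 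On the formula $R_F(x)=vR_F(y)v^{-1}$ you are in places more explicit than the paper: you actually prove that every $s\in R_F(y)$ ends in $v$ (the paper only remarks this after the statement), and your interior-occurrence argument is a worked-out version of the paper's one-line claim that $v^{-1}rv\in R_F(y)$ ``since $r\in R_F(x)$''.

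Two points need tightening. First, ``the identity $ys=u\cdot xr\cdot v$ makes $ys\in F\Leftrightarrow xr\in F$'' is not right as stated: the identity plus factoriality gives only the forward direction. The converse, $xr\in F\Rightarrow uxrv\in F$, requires the forcing argument you use elsewhere: since $F$ is biessential and $xr$ ends with $x$, any right extension of $xr$ in $F$ of length $|v|$ is forced to equal $v$ (the unique element of $V(x)$ of that length), and then any word of length $|u|$ preceding this occurrence of $xv$ in a word of $F$ is forced to equal $u$; only then is $uxrv\in F$. The same forcing is needed for ``the terminal-$y$ condition for $ys$ equivalent to the terminal-$x$ condition for $xr$'': from $xr\in A^+x$ you only get that $ys$ ends in $xv$, and you must force the $u$ to its left (this is the paper's explicit step ``$u$ is a suffix of $s$''). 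Second, for unioccurrence you only exclude a second occurrence of $x$ to the right of the designated one; the left case is not literally symmetric under your definitions, since your $u$ is minimal with $uxv$ left-special rather than with $ux$ left-special: for an occurrence $y=u'xv'$ with $|u'|<|u|$ you must first use the chain structure of $V(x)$ to see that $v$ is a prefix of $v'$, and then conclude that $u'xv$ is left-special, contradicting minimality of $u$. Both repairs use only tools already present in your write-up, so the argument is sound once they are written out.
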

Note that in the above statement, since all the elements of
$R_F(y)$ end with $v$, the notation $v^{-1}$ can be indifferently
interpreted as the residual or the inverse of $v$ in the free group.

Let $F$ be a factorial set.
The \emph{Rauzy graph} of order $n\ge 1$ is the following labeled
graph $G_n$. Its vertices are the words in the set $F\cap A^n$.
Its edges are the triples $(x,a,y)$ for all $x,y\in F\cap A^n$
and $a\in A$ such that $xa\in F\cap Ay$.

Observe that when $F$ is recurrent, any Rauzy graph
is strongly connected. Indeed, let $u,w\in F\cap A^n$.
Since $F$ is recurrent, there is a $v\in F$ such
that $uvw\in F$. Then there is a path
in $G_n$ from $u$ to $w$ labeled $wv$.

\begin{lemma}\label{lemma1}
Let $F$ be a recurrent set.
If $F$ is not periodic, any word of $F$ is a prefix of a right-special word
and a suffix of a left-special word. 
\end{lemma}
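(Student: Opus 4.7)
My plan is to argue by contradiction, exploiting the symmetry between left and right. Since the reversal $\widetilde F = \{\widetilde u \mid u \in F\}$ is again recurrent and non-periodic, and $w$ is a suffix of a left-special word of $F$ if and only if $\widetilde w$ is a prefix of a right-special word of $\widetilde F$, it suffices to prove that every $w \in F$ is a prefix of some right-special word.

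Fix $w \in F$ and suppose, aiming for a contradiction, that no extension $wv \in F$ satisfies $r(wv) \ge 2$. Since $F$ is recurrent and non-periodic, it is biessential (as noted before the statement), so in fact $r(wv) = 1$ for every $v$ with $wv \in F$. Iterating, I would extract the unique right-infinite word $x = a_1 a_2 \cdots$ such that $w a_1 \cdots a_n \in F$ for all $n \ge 0$.

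The heart of the argument is to show that $wx$ is purely periodic. Applying recurrence to the pair $(w,w)$ yields $v \in F$ with $wvw \in F$; by uniqueness of the right extensions this word is a prefix of $wx$, so $w$ reoccurs in $wx$ at position $|v|+|w|$. The infinite tail of $wx$ starting at this position is itself a right extension of $w$ with all prefixes in $F$ (here factoriality is used), hence by uniqueness it coincides with $wx$ itself. This forces $wx = p^{\omega}$, where $p$ is the prefix of $wx$ of length $|v|+|w|$.

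To close the proof, for an arbitrary $y \in F$ recurrence gives $u$ with $wuy \in F$; uniqueness again makes $wuy$ a prefix of $p^{\omega}$, so $y$ is a factor of some $p^n$. Conversely, every factor of $p^{\omega}$ lies in $F$ because $F$ is factorial and contains every prefix of $wx$. Thus $F$ equals the set of factors of $p^{*}$, contradicting non-periodicity. The main obstacle is the middle step: turning the uniqueness of the infinite right extension of $w$, combined with recurrence, into a genuine period of the right-infinite word $wx$; once this is set up, the remaining arguments are routine uses of factoriality and recurrence.
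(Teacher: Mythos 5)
Your proof is correct, but it follows a genuinely different route from the paper. The paper argues inside the Rauzy graph $G_n$ with $n=|x|$: recurrence makes $G_n$ strongly connected, non-periodicity forces some vertex to have out-degree at least $2$, hence a right-special word $y$ of length $n$ exists, and the label $v$ of a path from $x$ to $y$ is claimed to give a right-special word $xv$; the left-special half is then obtained ``symmetrically''. You instead argue by contradiction: if no right extension $wv\in F$ is right-special, biessentiality gives a unique right-infinite extension $wx$ all of whose prefixes lie in $F$, recurrence applied to $(w,w)$ makes $w$ reoccur in $wx$, uniqueness turns that reoccurrence into a genuine period $p$, and a final use of recurrence and factoriality identifies $F$ with the factors of $p^*$, contradicting non-periodicity; the left-special statement follows by passing to the reversed set. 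Your version is more elementary (no Rauzy graphs at all) and, in fact, more watertight: the paper's step asserting that $xv$ is right-special because its length-$n$ suffix $y$ is right-special needs care, since a word ending in a right-special factor need not itself be right-special (in the Fibonacci set, $baba$ ends with the right-special word $a$ but $babab\notin F$), whereas your unique-extension argument sidesteps this issue entirely; what the paper's approach buys is brevity and consistency with the Rauzy-graph machinery used later (Propositions~\ref{propRauzyGraphs} and~\ref{proposition3}). Two small points to tidy: your recurrence step needs $|wv|\ge 1$, which is automatic unless $w$ is the empty word (a trivial case, since any right-special word has the empty prefix), and the appeal to biessentiality uses $F\ne\{1\}$, which holds because $\{1\}$ is periodic under the paper's conventions.
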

\begin{proof}
Let $x\in F$ be of length $n$. If $F$ is not periodic, there is a 
 vertex of the Rauzy graph $G_n$ which is the origin of more
than one edge (otherwise, the graph $G_n$ is reduced to
a cycle and $F$ is periodic).
Thus there is a word $y$ of length $n$ which is right-special. Thus
the label $v$ of path from $x$ to $y$ is  such that $xv$ is right-special.

The proof that $x$ is a suffix of a left-special word is symmetrical.
\end{proof}

\begin{lemma}\label{lemma2}
Let $F$ be a recurrent set.
If $F$ is not periodic, any word in $F$ is a factor of a bispecial word.
More precisely, if $u$ is of minimal length such that $ux$
is left-special and $v$ of minimal length such that $xv$
is right-special, then $y=uxv$ is a bispecial word containing $x$
of minimal length and $x$ is unioccurrent in $y$.
\end{lemma}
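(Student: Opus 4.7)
My plan is to combine Lemma~\ref{lemma1} with a careful propagation argument showing that the intermediate right-extensions of $x$ leading up to $xv$ are forced to be unique by the minimality of $v$, and that this determinism then lifts from $x$ to $uxv$ by factoriality and biessentialness (the latter being automatic for recurrent $F$).

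First I would invoke Lemma~\ref{lemma1} to get a shortest $v = a_1\cdots a_m$ with $xv$ right-special and a shortest $u$ with $ux$ left-special (possibly empty). The minimality of $v$ implies that for every $0\le i<m$ the word $xa_1\cdots a_i$ lies in $F$ and is not right-special; hence biessentialness forces $R(xa_1\cdots a_i)=\{a_{i+1}\}$, and the symmetric statement holds for the left-extensions determined by $u$. The key step is then to prove $L(uxv)=L(ux)$ and $R(uxv)=R(xv)$. Factoriality gives one inclusion in each; for the reverse on the left, I would take any $c\in L(ux)$, note that $R(cux)\subseteq R(ux)\subseteq R(x)=\{a_1\}$ by factoriality, and use biessentialness of $F$ to conclude $R(cux)=\{a_1\}$, hence $cuxa_1\in F$. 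Iterating, with the relations $R(cuxa_1\cdots a_i)=\{a_{i+1}\}$ at each step, one gets $cuxv\in F$, so $c\in L(uxv)$; in particular $uxv\in F$. The symmetric argument handles $R(uxv)$. This gives $\ell(uxv)=\ell(ux)\ge 2$ and $r(uxv)=r(xv)\ge 2$, so $y=uxv$ is bispecial.

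For unioccurrence of $x$ in $y$, I would suppose $x$ occurs at a position $j\ne|u|$. If $j<|u|$, the length-$(j+|x|)$ prefix of $y$ has the form $u'x$ with $|u'|<|u|$; since $y$ is left-special, factoriality ($L(y)\subseteq L(u'x)$) forces $u'x$ left-special, contradicting minimality of $u$. The case $j>|u|$ is symmetric, contradicting minimality of $v$. The minimality of $|y|$ among bispecial words containing $x$ then follows immediately: for any bispecial $y'=u'xv'$ containing $x$, the same factoriality trick shows $u'x$ left-special and $xv'$ right-special, so $|u'|\ge|u|$, $|v'|\ge|v|$, and $|y'|\ge|uxv|$.

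The main obstacle is the propagation argument in the second paragraph: one must use the minimality of $v$ (not merely that $xv$ is right-special) to get the determinism of the intermediate $R(xa_1\cdots a_i)$, and couple this with biessentialness to lift these unique right-extensions from $x$ to $cux$ for every $c\in L(ux)$. Once that interplay between minimality and biessentialness is correctly set up, the remaining claims of the lemma are routine factorial bookkeeping.
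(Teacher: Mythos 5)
Your proposal is correct and follows essentially the same route as the paper: the iteration showing $R(cuxa_1\cdots a_i)=\{a_{i+1}\}$ (using minimality of $v$ to rule out right-special intermediate words and biessentialness of the recurrent set to guarantee right-extendability) is exactly the paper's induction on prefixes of $v$, and your minimality and unioccurrence arguments via factoriality match the paper's as well. No gaps; you merely spell out some details (e.g.\ why a second occurrence of $x$ is impossible) that the paper leaves terse.
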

\begin{proof}
Let us show by induction on the length of a prefix $v'$ of $v$
that $auxv'\in F$ for any $a\in L(ux)$. This is true if $v'$
is empty. Otherwise, set $v'=v''b$ with $b\in A$. For any $a\in
L(ux)$,
we have $auxv''\in F$ by induction hypothesis. On the other hand,
by the minimality of $v$, $xv''$ is not right-special
and thus $auxv''$ is not right-special. This forces $auxv''b\in F$,
which proves the property for $v'$. This implies that $uxv$ is
left-special. The proof that $uxv$ is right-special is symmetrical.
 If $y=pxs$ is a bispecial word containing  $x$, then
$px$ is left-special and $xs$ is right-special.
Thus $|p|\ge |u|$ and $|s|\ge|v|$ and thus $|y|\ge|uxv|$.
Finally, assume that $x$ has a second occurrence in $y$. Then
$y=u'xv'$ and either $|u'|<|u|$ or $|v'|<|v|$. Both
options are impossible and thus $x$ is unioccurrent in $y$.
\end{proof}
\begin{proofof}{of Proposition \ref{propositionBispecial}}
We may assume $\Card(A)\ge 2$.
The first assertion is a consequence of Lemma~\ref{lemma2}.

For the
second assertion, let $u$ be of minimal length such that
$ux$ is left-special and $v$ of minimal length such that
$xv$ is left-special. By Lemma~\ref{lemma2}, $y=uxv$ is a bispecial
word containing $x$ of minimal length and $x$ is unioccurrent in $y$. 
Since $u$ is of minimal length, any extension of $x$ to
the left is comparable with $u$ for the suffix order.
In the same way any extension of $x$ to the right
is comparable with $v$ for the prefix order.
Thus there cannot be another
bispecial word $u'xv'$ of the same length since otherwise
$|u'|<|u|$, contradicting the hypothesis on $u$ since
$u'x$ is left-special or $|v'|<|v|$, contradicting
the hypothesis on $v$. Assume that $r\in R_F(x)$. Set $xr=sx$.
Then $u$ is comparable with $s$ for the prefix order
 Similarly $v$ is comparable
with $r$ for the prefix order. This implies that
$u$ is a suffix of $s$ and $v$ is a prefix of $r$ since
otherwise $x$ would have a second occurrence in $y$. 
\begin{figure}[hbt]
\gasset{Nadjust=wh,AHnb=0}\centering
\begin{picture}(80,20)
\node(1)(-5,20){}\node(2)(10,20){}\node(3)(30,20){}\node(4)(45,20){}
\node(1')(10,15){}\node(2')(30,15){}\node(3')(60,15){}
\node(1'')(10,10){}\node(2'')(40,10){}\node(3'')(60,10){}
\node(1''')(25,5){}\node(2''')(40,5){}\node(3''')(60,5){}\node(4''')(75,5){}

\drawedge(1,2){$u$}\drawedge(2,3){$x$}\drawedge(3,4){$v$}
\drawedge(1',2'){$x$}\drawedge(2',3'){$r$}
\drawedge(1'',2''){$s$}\drawedge(2'',3''){$x$}
\drawedge(1''',2'''){$u$}\drawedge(2''',3'''){$x$}\drawedge(3''',4'''){$v$}
\end{picture}
\caption{First returns to $x$ and $y$.}\label{figFirstReturns}
\end{figure}
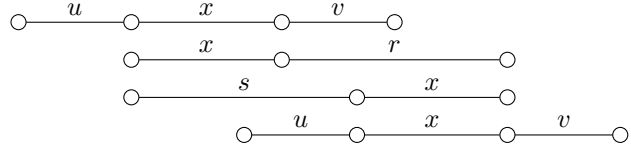

Set $s=s'u$ and $r=vr'$. Then
\begin{displaymath}
yv^{-1}rv=uxvv^{-1}rv=uxvr'v=uxrv=usvx=us'uxv=us'y.
\end{displaymath}
This
implies that $v^{-1}rv$ is in $\Gamma_F(y)$ (see
Figure~\ref{figFirstReturns}). Since $r\in R_F(x)$, this
implies acually that  $v^{-1}rv\in R_F(y)$
The converse holds for the same reasons.
\end{proofof}

\begin{example}
Let $A=\{a,b\}$ and let $F$ be the Fibonacci set. The shortest
bispecial word containing $x=aa$ is $y=abaaba$. One has
$R_F(x)=\{baa,babaa\}$ and $R_F(y)=\{aba,baaba\}$. Thus
$R_F(x)=baR_F(y)(ba)^{-1}$
as asserted in Proposition~\ref{propositionBispecial}.
\end{example}
\section{Main result}\label{sectionMainResult}
We will prove the following result. 

\begin{theorem}\label{proposition1}
Let $A$ be three letter alphabet.
Let $F$ be a  uniformly recurrent set on the alphabet $A$
containing $A$ and
satisfying the
tree condition. 
For any $x\in F$, the set $R_F(x)$ is a basis of the free
group on $A$.
\end{theorem}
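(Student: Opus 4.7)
\textit{Proof plan.} I would proceed in three steps: reduce to the bispecial case, count the first return words, and verify that they generate the free group via a classification of local Rauzy-graph configurations.

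\textbf{Step 1 (Reduction to $x$ bispecial).} By Proposition~\ref{propositionBispecial}, if $y=uxv$ is the shortest bispecial word containing $x$, then $R_F(x)=vR_F(y)v^{-1}$. Conjugation by $v$ is an automorphism of the free group on $A$ and therefore maps bases to bases. So I may assume from the outset that $x$ itself is bispecial.

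\textbf{Step 2 (Cardinality).} I claim $\Card(R_F(x))=3$. Introduce symmetrically the set $L_F(x)$ of first left return words to $x$: it is a finite $Fx^{-1}$-maximal suffix code, while $R_F(x)$ is a finite $x^{-1}F$-maximal prefix code. The remark following Proposition~\ref{PropStrongTreeCondition} then guarantees that the generalized extension graph $G_{L_F(x),R_F(x)}(x)$ is a tree, its edges being exactly the pairs $(u,v)$ with $u\in L_F(x)$, $v\in R_F(x)$ and $uxv\in F$, i.e.\ pairs of consecutive return words along an orbit of $F$. Counting edges of this tree (a tree with $|L_F(x)|+|R_F(x)|$ vertices has $|L_F(x)|+|R_F(x)|-1$ edges) and combining with the complexity formula $p(n)=2n+1$, valid here because $\Card(A\cap F)-1=2$ (Section~\ref{sectionTreeSets}), should force $\Card(L_F(x))=\Card(R_F(x))=3$.

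\textbf{Step 3 (Generation of $F_A$).} Since finite-rank free groups are Hopfian, any three elements that generate $F_A$ already form a basis. It therefore remains to show that $R_F(x)$ generates $F_A$. Following the hint in the introduction, I would classify the possible Rauzy graphs $G_{|x|}$ in a neighbourhood of the bispecial vertex $x$: the extension graph $G(x)$ being a tree constrains $\ell(x),r(x)\in\{2,3\}$ with $e(x)=\ell(x)+r(x)-1$, so there is only a short list of admissible local shapes. In each case one reads off the three first return words explicitly as labels of the elementary first-return cycles at $x$ in $G_{|x|}$, and one checks directly that each letter of $A$ can be expressed as a product of these return words and their inverses in $F_A$.

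\textbf{Main obstacle.} The crux lies in Step~3: enumerating the Rauzy graphs and verifying generation case by case. The three-letter hypothesis is essential both in keeping the classification tractable and in matching the count $\Card(R_F(x))=3$ with the rank of $F_A$; the tree condition, in its strong form (Proposition~\ref{PropStrongTreeCondition}), is what rules out the configurations in which the return words would fail to span the full free group. Once the admissible Rauzy graphs are listed, the verification in each case should be a finite, essentially routine, computation.
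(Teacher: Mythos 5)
Your skeleton (reduce to a bispecial $x$ via Proposition~\ref{propositionBispecial}, show $\Card(R_F(x))=3$, show generation, conclude by the Hopfian property) is the same as the paper's, but the heart of your Step~3 has a genuine gap: you never establish that the relevant subgroup is the \emph{whole} free group on $A$. From the type of the Rauzy graph $G_{|x|}$ you can at best express the return words in terms of the edge labels $u,v,w,t,z$ of that type; but these labels are unknown words of unbounded length, so no finite case-by-case computation can "check directly that each letter of $A$ is a product of the return words". Generation is not a local property of the type: what is needed is precisely the paper's Proposition~\ref{proposition3}, that the group defined by the Rauzy graph $G_n$ (labels of generalized closed paths at a vertex) is the free group on $A$, proved by the Stallings-folding argument and the isomorphism $G_n/\theta_n\cong G_{n-1}$ of Proposition~\ref{propRauzyGraphs}, which uses the tree condition at \emph{all} shorter words, not just at $x$. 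Your plan contains no substitute for this global ingredient, and the Chacon example in the paper shows it genuinely requires the tree condition.

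Two further points. First, your identification of $R_F(x)$ with the "labels of the elementary first-return cycles at $x$" is exactly what fails in the hard cases: in types $6$ and $7$ of the finite case only three of the four first-return path labels belong to $F$, and in the two infinite types there are infinitely many first-return paths while $R_F(x)=\{u,vw^nt,vw^{n+1}t\}$ for a single $n$ (Proposition~\ref{proposition2}); proving this, and that the three surviving return words still generate the same subgroup as all the cycle labels, is the bulk of the paper's argument and rests on delicate applications of Proposition~\ref{PropStrongTreeCondition} to carefully chosen generalized extension graphs --- it is not a routine verification. Second, your Step~2 is only asserted: the graph $G_{L_F(x),R_F(x)}(x)$ is indeed a tree with $\Card(L_F(x))+\Card(R_F(x))-1$ edges, but that holds for any sizes of the two sides, and you give no actual link with the complexity $2n+1$ that would force $\Card(R_F(x))=3$ (e.g.\ nothing as stated excludes $\Card(L_F(x))=2$, $\Card(R_F(x))=4$). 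In the paper the cardinality is not obtained by such a count but falls out of the same case analysis as the generation statement (Proposition~\ref{propositionReturn}).
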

The proof uses some preliminary results.
Proposition~\ref{propositionBispecial} shows that it is enough to prove
Theorem~\ref{proposition1} for a bispecial word $x$.

Let $G$ be a graph labeled by words on the alphabet $A$ and let let
$E$
be its set of edges. We define $E^{-1}=\{(p,x^{-1},q)\mid (q,x,p)\in E\}$.
A \emph{generalized path} in $G$ is a path in the graph having
$E\cup E^{-1}$ as set of edges. Given a vertex $v$ of $G$,
 the set of labels of generalized paths
in $G$ from $v$ to $v$ is a subgroup of the free group
on $A$.  It is called the group \emph{defined} by the graph $G$
 with respect to  $v$. We will prove the following statement
(which holds without hypothesis on the number of letters).
\begin{proposition}\label{proposition3}
The group defined by any Rauzy graph of a tree set containing the
alphabet $A$
with respect to any vertex is the free group on $A$.
\end{proposition}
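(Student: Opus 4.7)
The plan is to match the rank of the fundamental group of $G_n$ against that of the free group on $A$ and then to prove that the canonical homomorphism $\varphi$ between them is surjective. Since $F$ is a tree set containing $A$ with $\Card(A)\ge2$, it is non-periodic, and as noted after the definition of Rauzy graphs the recurrence of $F$ makes $G_n$ strongly connected. Using the complexity formula $p_F(n)=kn+1$ with $k=\Card(A)-1$ recalled in Section~\ref{sectionTreeSets}, I count $\Card(V(G_n))=kn+1$ and $\Card(E(G_n))=k(n+1)+1$, so the fundamental group of $G_n$, viewed as a graph in which each edge can be traversed in either direction, is free of rank $E-V+1=k+1=\Card(A)$. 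Reading labels of generalized cycles turns this fundamental group into a subgroup $H$ of the free group on $A$ via a homomorphism $\varphi$, and both source and target of $\varphi$ are free of rank $\Card(A)$.

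The main step is to show that $\varphi$ is surjective, that is, that $H$ is the entire free group on $A$. I would use Stallings foldings. The definition of the Rauzy graph forces every in-edge of a vertex $y$ of $G_n$ to be labeled by the last letter of $y$, so whenever such a vertex has in-degree at least $2$ all the source vertices of those in-edges may be identified; this reduction preserves $H$ as a subgroup of the free group. Iterating until no further fold is possible produces a labeled graph $G'$ whose map to the one-vertex $\Card(A)$-petal rose representing the free group on $A$ is an immersion, and the image of $\pi_1(G')$ equals $H$ while injecting into the free group. The proposition then reduces to the claim that $G'$ has a single vertex. I would establish this by applying Proposition~\ref{PropStrongTreeCondition} to prefix and suffix codes coming from paths in $G_n$: the connectedness of the generalized extension graph of a well-chosen bispecial word should provide the chain of elementary foldings needed to merge any two vertices.

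The main obstacle is precisely this last structural claim, which must translate the purely local tree condition on extension graphs into the global fact that $G_n$ folds to the rose. A more concrete alternative would be to choose a spanning tree of $G_n$ obtained from a bispecial word containing the base vertex, and then to verify, again via Proposition~\ref{PropStrongTreeCondition}, that the $\Card(A)$ cycles associated with non-tree edges form a free basis of the free group on $A$. This has the advantage of producing an explicit basis, but it requires the same combinatorial translation between extension graphs and Rauzy graphs.
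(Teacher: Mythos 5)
You have set up the right framework (read labels of generalized cycles, note that Stallings foldings preserve the defined subgroup, try to fold $G_n$ down to the rose), and you correctly identify that everything hinges on a structural claim you do not prove: that the folded graph has a single vertex. That claim is precisely where the tree condition must enter, and your proposal leaves it as an acknowledged obstacle, gesturing at Proposition~\ref{PropStrongTreeCondition} without a concrete mechanism. So there is a genuine gap: the rank count ($E-V+1=\Card(A)$, via the complexity $kn+1$) cannot close it, since a proper subgroup of a free group of rank $\Card(A)$ can itself have rank $\Card(A)$, and no argument is given that translates connectedness of extension graphs into the assertion that folding terminates at the one-vertex rose.

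The missing idea, which is the heart of the paper's proof, is an induction on $n$ through the quotients $G_n/\theta_n$, where $\theta_n$ identifies $ax$ and $bx$ whenever $a,b\in L(x)$ are joined by a path in the \emph{ordinary} extension graph $G(x)$ (Proposition~\ref{propRauzyGraphs}): because each $G(x)$ is connected, the $\theta_n$-class of $ax$ has exactly $\ell(x)$ elements, which matches the fibre of the suffix map $A^n\to A^{n-1}$, so $G_n/\theta_n\cong G_{n-1}$. Moreover this quotient is realized by elementary Stallings foldings, since a path $a=a_0,b_1,a_1,\ldots,b_k,a_k=b$ in $G(x)$ yields successive foldings at the vertices $xb_1,\ldots,xb_k$ identifying $a_0x,\ldots,a_kx$. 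Iterating down to $G_0$, which is the rose on $A$, gives the result. Note that only the plain tree condition on the graphs $G(x)$ is needed here; the generalized extension graphs of Proposition~\ref{PropStrongTreeCondition} are used elsewhere in the paper (for Proposition~\ref{propositionReturn}), and trying to route the argument through them, or through a spanning tree of $G_n$ chosen from a bispecial word, does not by itself supply the step from the local tree condition to the global collapse of $G_n$; the stepwise reduction $G_n\to G_{n-1}$ is what makes that translation work.
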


We will prove by checking the possible types of Rauzy graphs
the following result.
\begin{proposition}\label{propositionReturn}
Let $F$ be a  uniformly recurrent set on a three letter alphabet $A$,
containing $A$ and
satisfying the
tree condition.
For any bispecial word $x$, 
the set $R_F(x)$ has three elements and generates the 
group defined by the Rauzy graph of order $n=|x|$.
\end{proposition}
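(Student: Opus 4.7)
The plan is a case analysis on the local structure of the Rauzy graph $G_n$ at $n=|x|$, guided by a bit of numerology.

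I would start with the numerical constraints. Because $F$ is a tree set containing $A$ on a three-letter alphabet, the complexity is $p(n)=2n+1$, so $G_n$ has $2n+1$ vertices and $p(n+1)=2n+3$ edges. Since $F$ is uniformly recurrent, $G_n$ is strongly connected, so as an abstract graph its fundamental group is free of rank $(2n+3)-(2n+1)+1=3$; by Proposition~\ref{proposition3} the label map identifies this with the free group on $A$. The identity $\sum_{w\in F\cap A^n}(r(w)-1)=2$ leaves only two patterns for the right-special words of length $n$, namely a single word with $r=3$ or exactly two words with $r=2$, and symmetrically on the left. Bispeciality of $x$ then gives four main cases indexed by $(\ell(x),r(x))\in\{2,3\}^2$, each possibly refined by whether the coexisting second special vertex (if any) is again $x$ on the other side or a new word.

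In each case I would apply the tree condition: the extension graph $G(x)$ is a tree on $\ell(x)+r(x)$ vertices with exactly $\ell(x)+r(x)-1$ edges, which constrains the pairs $(a,b)\in L(x)\times R(x)$ with $axb\in F$ and hence the branching of $G_n$ at $x$. Combined with the (at most) one auxiliary right-special and one auxiliary left-special vertex, this determines $G_n$ up to the lengths of the deterministic arcs joining the special vertices. I would then enumerate the directed simple circuits through $x$, using that they are in bijection with $R_F(x)$: a first return word is exactly the label of a directed path in $G_n$ from $x$ back to $x$ not revisiting $x$ in between. In every case this enumeration should yield exactly three circuits. For the generation claim I would, in each case, exhibit a spanning tree of $G_n$ and verify that the three fundamental cycles associated with the non-tree edges decompose as words in the three first return cycles inside the free group on $A$.

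The main obstacle is the case $(\ell(x),r(x))=(2,2)$, where $G_n$ has at least one auxiliary right-special vertex $y$ and one auxiliary left-special vertex $z$, possibly with further coincidences. Simple circuits at $x$ may branch at $y$ and at $z$, so counting them and separating the independent ones forces a careful use of Proposition~\ref{PropStrongTreeCondition}, applied at the bispecial extension of $x$ furnished by Proposition~\ref{propositionBispecial}, in order to rule out the pairings forbidden by the tree condition. Showing that in this case too the count is exactly three and that these circuits generate the rank-three fundamental group — despite the presence of simple cycles of $G_n$ that do not pass through $x$ — will be the substantive part of the case analysis, and combined with Proposition~\ref{proposition3} it then yields Theorem~\ref{proposition1}.
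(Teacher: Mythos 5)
Your frame (complexity $2n+1$, rank count, case analysis on the special vertices of $G_n$, which is essentially the Santini-Bouchard classification the paper invokes) matches the paper's skeleton, but the engine of your argument contains a genuine gap: first return words to $x$ are \emph{not} in bijection with the paths in $G_n$ from $x$ back to $x$ avoiding $x$ in between. Every element of $R_F(x)$ labels such a path, but the converse fails: a path in the Rauzy graph of order $n$ only guarantees that the factors of length $n+1$ of its label lie in $F$, not the label itself. This is precisely where the paper has to work. In its cases 1--5 the bijection does hold, but for the ad hoc reason that each return path uses an edge private to it, which forces its label into $F$. In cases 6 and 7 the return paths number four, and in the two ``infinite'' types they are infinitely many (the return paths wind around a cycle avoiding $x$, so they are not even simple circuits); in all these cases one must decide which of the candidate labels actually belong to $F$, and that is done by applying Proposition~\ref{PropStrongTreeCondition} to carefully chosen generalized extension graphs $G_{U,V}$, not by enumerating circuits. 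So your expectation that ``in every case this enumeration should yield exactly three circuits'' is false as stated, and the claim that $\Card R_F(x)=3$ cannot be read off the graph alone.

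Moreover, even granting three return words, your generation step (spanning tree plus decomposition of the three fundamental cycles) needs to know \emph{which} three labels occur. In the infinite case the paper proves $R_F(x)=\{u,vw^nt,vw^{n+1}t\}$ with \emph{consecutive} exponents (Proposition~\ref{proposition2}), and it is exactly this consecutiveness that allows Corollary~\ref{corollary1} to recover all of $u\cup vw^*t$, hence the full group defined by $G_n$; if the two exponents could differ by more than one, the subgroup generated would be proper. Your proposal correctly senses that the hard case is the one with cycles avoiding $x$ and that Proposition~\ref{PropStrongTreeCondition} must intervene there, but it does not supply the arguments (maximal suffix/prefix codes $U=Xy^{-1}\cap Fz^{-1}$, $V=Y\cap z^{-1}F$, connectedness and acyclicity of $G_{U,V}(yw^m)$) that pin down the return set; without them both the count of three and the generation claim remain unproved.
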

Together with Proposition~\ref{proposition3}, this implies that
$R_F(x)$ is a basis of the free group on $A$ and 
thus, by Proposition~\ref{propositionBispecial}, Theorem~\ref{proposition1}.

\subsection{Stallings foldings of Rauzy graphs}
A \emph{morphism} $\varphi$ from a labeled graph $G$ onto a labeled graph
$H$ is a map from the set of vertices of $G$ onto the set
of vertices of $H$ such that $(u,a,v)$ is an edge of $H$ if and only
if there is an edge $(p,a,q)$ of $G$ such that $\varphi(p)=u$
and $\varphi(q)=v$. An \emph{isomorphism} of labeled graphs
 is a bijective morphism.

The \emph{quotient} of a labeled graph $G$ by an equivalence $\theta$,
denoted $G/\theta$,
is the graph with vertices the set of equivalence classes of $\theta$
and an edge from the class of $u$ to the class of $v$
labeled $a$ if there is an edge labeled $a$ from a vertex $u'$
equivalent to $u$ to a vertex $v'$ equivalent to $v$.
The map from a vertex of $G$ to its equivalence class is
a morphism from $G$ onto $G/\theta$.

We consider on the graph $G_n$ the equivalence $\theta_n$
formed by the pairs
$(u,v)$ with  $u=ax$, $v=bx$, $a,b\in L(x)$
such that there is
a path from $a$ to $b$ in the graph $G(x)$ (and more precisely
from the vertex corresponding to $a$ to the vertex corresponding
to be in the copy corresponding to $L(x)$ in the bipartite graph
$G(x)$).
\begin{proposition}\label{propRauzyGraphs}
If $F$ satisfies the tree condition, for each $n\ge 1$, the quotient
of $G_n$ by the equivalence $\theta_n$ is isomorphic to $G_{n-1}$.
\end{proposition}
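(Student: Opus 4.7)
The plan is to show that, under the tree condition, the equivalence $\theta_n$ is nothing but the relation \emph{same suffix of length $n-1$}, after which the map ``strip the initial letter'' realises the claimed isomorphism.

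The first step is to identify the classes of $\theta_n$. By the tree condition, for every $x\in F\cap A^{n-1}$ the extension graph $G(x)$ is connected, so any two vertices $a,b$ in the $L(x)$-copy are joined by a path. Consequently the generating pairs of $\theta_n$ exhaust $\{(ax,bx)\mid a,b\in L(x)\}$ for every such $x$, and this collection is already an equivalence relation: the class of $u\in F\cap A^n$ is $L(x)\,x$, where $x$ is the length-$(n-1)$ suffix of $u$.

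Next I would introduce the candidate isomorphism $\bar\varphi\colon G_n/\theta_n\to G_{n-1}$ induced by $\varphi\colon u\mapsto$ length-$(n-1)$ suffix of $u$. Since any tree set is biessential, each $x\in F\cap A^{n-1}$ has a left extension, so $\varphi$ is surjective on vertex sets, and by the previous paragraph its fibres coincide with the classes of $\theta_n$. Hence $\bar\varphi$ is a well-defined bijection on vertices.

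Finally I would verify the edge correspondence. An edge of $G_n$ has the form $(bx,c,xc)$ with $bxc\in F$ and $x\in A^{n-1}$; its image under the quotient goes from the class of $bx$ (namely $x$) to the class of $xc$ (namely the length-$(n-1)$ suffix of $xc$), labelled $c$, which is exactly the edge of $G_{n-1}$ attached to $xc\in F$. Conversely, an edge $(x,c,y)$ of $G_{n-1}$ corresponds to $xc\in F$, and biessentiality supplies $b$ with $bxc\in F$, providing the lift $(bx,c,xc)$ in $G_n$. The only substantive input beyond bookkeeping is the first step: connectedness of $G(x)$, a consequence of the tree condition, is what collapses every ``strip the initial letter'' fibre to a single class; without it the quotient would generically be strictly finer than $G_{n-1}$.
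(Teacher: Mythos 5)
Your proof is correct and takes essentially the same route as the paper: both arguments identify the $\theta_n$-classes with the fibres $L(x)x$ of the suffix map $\varphi$ (using that the tree condition makes each $G(x)$ connected) and conclude that the induced map from $G_n/\theta_n$ onto $G_{n-1}$ is a bijection on vertices, hence an isomorphism. Your explicit check of the edge correspondence and of surjectivity via biessentiality simply spells out what the paper compresses into ``$\varphi$ is clearly a morphism.''
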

\begin{proof}
The map $\varphi:A^n\rightarrow A^{n-1}$ mapping a word of length $n$
to its suffix of length $n-1$ is clearly a morphism from $G_n$
onto $G_{n-1}$. If $u,v\in A^n$ are equivalent modulo $\theta_n$,
then $\varphi(u)=\varphi(v)$. Thus there is a morphism $\psi$ from
$G_n/\theta_n$ onto $G_{n-1}$. It is defined for any word $u\in F\cap
A^n$ by
$\psi(\bar{u})=\varphi(u)$
where $\bar{u}$ denotes the class of $u$ modulo $\theta_n$.
But since $F$ satisfies the tree
condition,
the class modulo $\theta_n$
of a word $ax$ of length $n$ has $\ell(x)$ elements, which is the same
as
the number of elements of $\varphi^{-1}(x)$.
This shows that $\psi$ is an isomorphism.
\end{proof}
Let $G$ be a labeled graph.
A \emph{Stallings folding} at vertex $v$ relative to letter $a$ of $G$
consists in 
identifying the edges coming into $v$ labeled $a$ and identifying
their origins. A Stallings folding does not modify the group defined
by the graph.
\begin{proofof}{of Proposition~\ref{proposition3}}
The quotient $G_n/\theta_n$ can be obtained by a sequence of Stallings
foldings from the graph $G_n$. Indeed, a Stallings folding at vertex $v$ identifies
vertices
which are equivalent modulo $\theta_n$.
Conversely, consider $u=ax$ and $v=bx$, with $a,b\in A$ such that
$a$ and $b$ (considered as elements of $L(x)$),
are connected by a path in $G(x)$. Let $a_0,\ldots a_k$ and
$b_1,\cdots b_{k}$ with $a=a_0$ and $b=a_k$ be such that
$(a_i,b_{i+1})$ for $0\le i\le k-1$
 and $(a_i,b_i)$ for $1\le i\le k$ are in $E(x)$. The successive
Stallings foldings at $xb_1,xb_2,\ldots,xb_k$ identify the
vertices $u=a_0x,a_1x,\ldots,a_kx=v$.
Indeed, since $a_ixb_{i+1}, a_{i+1}xb_{i+1} \in F$,
there are two edges labeled  $b_{i+1}$
going out of $a_ix$ and $a_{i+1}x$ which end at
$xb_{i+1}$. The Stallings folding identifies
 $a_ix$ and $a_{i+1}x$. The conclusion follows by induction.

Since the Stallings folding do not modify the group recognized, we
deduce
from Proposition~\ref{propRauzyGraphs} that the group defined
by the Rauzy graph $G_n$ is the same as the group defined by
the Rauzy graph $G_0$. Since $G_0$ is the graph with
one vertex and with loops labeled by each of the letters,
it defines the free group on $A$.
\end{proofof}
\begin{example}
Let $y$ be the infinite word obtained by decoding the Fibonacci word
into blocks of length $2$. Set $u=aa$, $v=ab$, $w=ba$. The graph
$G_2$ is represented on the left of Figure~\ref{figFiboBlocks}.
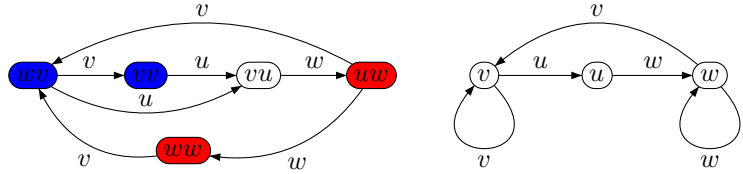
\begin{figure}[hbt]
\centering
\gasset{Nadjust=wh}
\begin{picture}(100,20)
\node[fillcolor=blue](wv)(0,10){$wv$}\node[fillcolor=blue](vv)(15,10){$vv$}\node(vu)(30,10){$vu$}\node[fillcolor=red](uw)(45,10){$uw$}
\node[fillcolor=red](ww)(20,0){$ww$}
\drawedge(wv,vv){$v$}\drawedge(vv,vu){$u$}\drawedge(vu,uw){$w$}
\drawedge[curvedepth=-7,ELside=r](uw,wv){$v$}\drawedge[curvedepth=5](uw,ww){$w$}
\drawedge[curvedepth=5](ww,wv){$v$}
\drawedge[curvedepth=-5](wv,vu){$u$}

\node(v)(60,10){$v$}\node(u)(75,10){$u$}\node(w)(90,10){$w$}
\drawloop[loopangle=-90](v){$v$}\drawedge(v,u){$u$}\drawedge(u,w){$w$}
\drawloop[loopangle=-90](w){$w$}\drawedge[curvedepth=-7,ELside=r](w,v){$v$}
\end{picture}
\caption{The Rauzy graphs $G_2$ and $G_1$ for the decoding of the Fibonacci word
  into blocks of length $2$.}\label{figFiboBlocks}
\end{figure}
The classes of $\theta_2$ are indicated with colors. The graph $G_1$
is represented on the the right.
\end{example}
The following example shows that Proposition~\ref{propRauzyGraphs} is false for sets
which do not satisfy the tree condition.
\begin{example}
Let $A=\{a,b,c\}$.
The \emph{Chacon word} on three letters
is the fixpoint $x=f^\omega(a)$ of the morphism $f$ from
$A^*$ into itself defined by $f(a)=aabc$, $f(b)=bc$ and $f(c)=abc$.
Thus $x=aabcaabcbcabc\cdots$. The \emph{Chacon set} is the set $F$ of
factors of $x$. It is of complexity $2n+1$ (see~\cite{PytheasFogg2002}
Section 5.5.2).
The Rauzy graph $G_1$ corresponding to the Chacon set is
represented in Figure~\ref{figChacon2} on the left.
The graph $G_1/\theta_1$ is represented on the right. It is
not isomorphic to $G_0$ since it has two vertices instead of one.
\begin{figure}[hbt]
\centering
\gasset{Nadjust=wh}
\begin{picture}(80,20)
\node(a)(0,5){$a$}\node(b)(20,5){$b$}\node(c)(40,5){$c$}

\drawloop[loopangle=180](a){$a$}\drawedge(a,b){$b$}
\drawedge(b,c){$c$}\drawedge[curvedepth=-5,ELside=r](c,a){$a$}\drawedge[curvedepth=5](c,b){$b$}

\node(ac)(60,5){}\node(b')(80,5){$b$}
\drawloop[loopangle=180](ac){$a$}\drawedge[curvedepth=5](ac,b'){$b$}
\drawedge[curvedepth=5](b',ac){$c$}
\end{picture}
\caption{The graphs $G_1$ and $G_1/\theta_1$.}\label{figChacon2}
\end{figure}
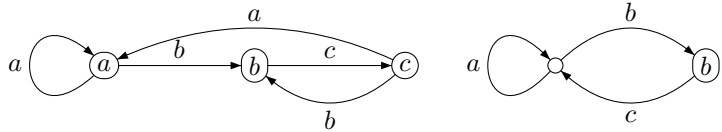
\end{example}

\subsection{Types of Rauzy graphs}
Let $G$ be a graph labeled by words on an alphabet $A$. 
Call left-special
(resp. right-special) a vertex of $G$ which has more than one predecessor
(resp. successor). Similarly a vertex is bispecial if it
is left and right-special.
The
\emph{type} of $G$ is the graph $G'$ defined as follows. Its vertices 
are the left-special and
right-special
vertices of $G$. There is an edge from $p$ to $q$ in $G'$ if there
is a path $\pi$ from $p$ to $q$ in $G$ which uses between $p$ and $q$ only
vertices which are not vertices of $G'$. The label of this edge
is the label of $\pi$.

The possible types of Rauzy graphs for a set of words
on a three letter alphabet with complexity $2n+1$ 
(and thus in particular for tree sets) have
been described in~\cite{Santini-Bouchard1997}. 
There are $9$ types of Rauzy graphs with a bispecial vertex $x$.
Seven of them are such that all cycles use this vertex.
This implies that the set of paths of first returns to  $x$
is finite and, for this reason we refer to this case as the
\emph{finite case}. The $7$ graphs are represented in Figure~\ref{figFinite}.

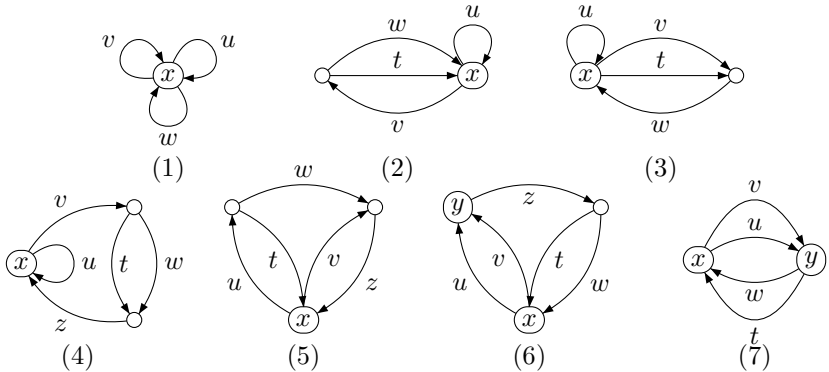
\begin{figure}[hbt]
\centering
\gasset{Nadjust=wh,loopdiam=5}
\begin{picture}(120,50)
\put(12,30){
\begin{picture}(15,15)
\node(m)(7.5,7.5){$x$}
\drawloop[loopangle=30](m){$u$}
\drawloop[loopangle=150](m){$v$}
\drawloop[loopangle=-90](m){$w$}
\node[Nframe=n](1)(7.5,-5){$(1)$}
\end{picture}
}
\put(40,30){
\begin{picture}(20,20)
\node(gm)(0,7.5){}
\node(dm)(20,7.5){$x$}

\drawloop[loopdiam=5](dm){$u$}\drawedge[curvedepth=5](gm,dm){$w$}
\drawedge[curvedepth=5](dm,gm){$v$}
\drawedge(gm,dm){$t$}
\node[Nframe=n](2)(10,-5){$(2)$}
\end{picture}
}
\put(75,30){
\begin{picture}(20,20)
\node(dm)(20,7.5){}
\node(gm)(0,7.5){$x$}

\drawloop[loopangle=90,loopdiam=5](gm){$u$}
\drawedge[curvedepth=5](gm,dm){$v$}\drawedge[curvedepth=5](dm,gm){$w$}
\drawedge(gm,dm){$t$}
\node[Nframe=n](3)(10,-5){$(3)$}
\end{picture}
}
\put(0,5){
\begin{picture}(15,15)
\node(dh)(15,15){}
\node(gm)(0,7.5){$x$}\node(db)(15,0){}

\drawloop[loopangle=0](gm){$u$}
\drawedge[curvedepth=3](gm,dh){$v$}\drawedge[curvedepth=3](db,gm){$z$}
\drawedge[curvedepth=3](dh,db){$w$}\drawedge[curvedepth=-3](dh,db){$t$}
\node[Nframe=n](4)(7.5,-5){$(4)$}
\end{picture}

}
\put(30,5){
\begin{picture}(15,15)
\node(gh)(-2,15){}\node(dh)(17,15){}
\node(mb)(7.5,0){$x$}

\drawedge[curvedepth=3](gh,dh){$w$}
\drawedge[curvedepth=3,ELside=r](gh,mb){$t$}\drawedge[curvedepth=3](mb,gh){$u$}
\drawedge[curvedepth=3](dh,mb){$z$}\drawedge[curvedepth=3,ELside=r](mb,dh){$v$}
\node[Nframe=n](5)(7.5,-5){$(5)$}
\end{picture}
}
\put(60,5){
\begin{picture}(15,15)
\node(gh)(-2,15){$y$}\node(dh)(17,15){}
\node(mb)(7.5,0){$x$}
\drawedge[curvedepth=3,ELside=r](gh,dh){$z$}
\drawedge[curvedepth=3](mb,gh){$u$}\drawedge[curvedepth=-3](mb,gh){$v$}
\drawedge[curvedepth=3](dh,mb){$w$}\drawedge[curvedepth=-3](dh,mb){$t$}
\node[Nframe=n](6)(7.5,-5){$(6)$}
\end{picture}

}
\put(90,5){
\begin{picture}(15,15)
\node(dm)(15,8){$y$}
\node(gm)(0,8){$x$}
\drawedge[curvedepth=3](gm,dm){$u$}\drawedge[curvedepth=8](gm,dm){$v$}
\drawedge[curvedepth=3](dm,gm){$w$}\drawedge[curvedepth=8](dm,gm){$t$}
\node[Nframe=n](7)(7.5,-5){$(7)$}
\end{picture}
}

\end{picture}
\caption{The finite case. }\label{figFinite}
\end{figure}
The two remaining types of graphs are such that the
set of paths of first returns to the bispecial vertex is infinite.
We refer to this case as the \emph{infinite case}.
The two types of graphs are represented in Figure~\ref{figInfinite}.

\subsection{The finite case}

We consider in turn the seven graphs.

\paragraph{Cases 1 to 5.} 
 In the first five cases, the
proof is straightforward because for each path of first return to the
bispecial vertex $x$, there is an edge which is only on this path.
 Since
each edge is on an infinite path with its label in $F$ , this implies that
the labels of all paths of first
return
to the bispecial vertex $x$ are in $F$. Thus
Proposition~\ref{propositionReturn} 
holds in these cases. We list in each of them the set $R(x)$.
\begin{displaymath}
\begin{array}{|c|c|}\hline
\text{Case}&R(x)\\ \hline
1&\{u,v,w\}\\ \hline
2&\{u,vw,vt\}\\ \hline
3&\{u,vw,tw\}\\ \hline
4&\{u,vwz,vtz\}\\ \hline
5&\{ut,uwz,vz\}\\ \hline
\end{array}
\end{displaymath}
\paragraph{Case 6.} Set $Z=\{uzw,uzt,vzt,vzw\}$. We define
\begin{displaymath}
U=\{xu,xv\}y^{-1},\quad V=\{w,t\}.
\end{displaymath}
In view of  applying Proposition~\ref{PropStrongTreeCondition} to the
generalized
extension graph
$G_{U,V}(yz)$, we prove the following.
\begin{proposition}
The set $U$ is an $F(yz)^{-1}$-maximal suffix code.
\end{proposition}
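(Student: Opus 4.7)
The plan is first to unpack the definition of $U$. With $n=|x|=|y|$, the type-graph edges $u$ and $v$ are walks in the Rauzy graph $G_n$ from $x$ to $y$, so $y$ is the length-$n$ suffix of both $xu$ and $xv$; hence the right residuals are well defined and $U=\{\alpha,\beta\}$ where $\alpha y=xu$ and $\beta y=xv$, with $|\alpha|=|u|$ and $|\beta|=|v|$. Membership $\alpha,\beta\in F(yz)^{-1}$ is immediate: $\alpha\cdot yz=xuz$ labels the walk $x\stackrel{u}{\to}y\stackrel{z}{\to}q$ in $G_n$, and similarly for $\beta$.

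Next I verify that $U$ is a suffix code. If $\alpha$ were a proper suffix of $\beta$ (the other case is symmetric), write $\beta=\gamma\alpha$ with $\gamma\in A^+$ and right-multiply by $y$ to get $xv=\gamma xu$. Then $x$ occurs as a length-$n$ window of $xv$ at position $|\gamma|$ with $0<|\gamma|<|v|$, so the walk in $G_n$ from $x$ labeled $v$ revisits the bispecial vertex $x$ strictly between its endpoints. This contradicts the fact that $v$ is an edge of the type graph, whose interior vertices are all non-special.

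The heart of the argument is $F(yz)^{-1}$-maximality. For an arbitrary $s$ with $syz\in F$ I will show that $s$ is suffix-comparable with $\alpha$ or with $\beta$. Consider the walk in $G_n$ representing $sy$: it ends at $y$, which has exactly two predecessors in $G_n$, lying respectively on the $u$-path and on the $v$-path from $x$. Every other vertex of these two paths is non-special, hence by the tree condition has a unique predecessor, so the backward trajectory from $y$ is fully determined after the initial binary choice and traces one of these two paths backward until either $|s|$ is exhausted or the vertex $x$ is reached. When $x$ is reached (the long case, e.g.\ $|s|\ge|u|$ on the $u$-side), the last $n+|u|$ characters of $sy$ equal $xu$, so $\alpha$ is a suffix of $s$; when it is not (the short case), the word $sy$ coincides with a suffix of $xu=\alpha y$ of length $n+|s|$, forcing $s$ to be a suffix of $\alpha$. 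The $v$-side is symmetric. The main obstacle is organizing this case analysis cleanly across the two length regimes; the unifying observation is simply that any sub-walk along the $u$-path (resp.\ $v$-path) represents a factor of the single word $xu=\alpha y$ (resp.\ $xv=\beta y$) ending at $y$.
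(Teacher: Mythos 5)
Your proof is correct and follows essentially the same route as the paper's: the suffix-code property comes from $x$ occurring only as a prefix of $xu$ and $xv$ (a consequence of the type-graph structure), and maximality from the fact that a word ending in $y$ must arrive along the $u$-path or the $v$-path, so that $sy$ is suffix-comparable with $xu$ or $xv$ and hence $s$ with an element of $U$. You simply make the backward tracing in the Rauzy graph explicit, and additionally check membership of $U$ in $F(yz)^{-1}$ (which the paper leaves implicit); it is the same argument in more detail.
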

\begin{proof}
Each word $xu,xv$ contain only one occurrence of $x$. Thus $\{xu,xv\}$
is a suffix code and consequently also $\{xu,xv\}y^{-1}$. Let
$s$ be such that $syz\in F$. Since the vertex $y$ can only be reached
from $x$ by a path labeled $u$ or $v$, the word $sy$
 is comparable for the suffix order with
a word in $\{xu,xv\}$. Consequently, $s$ is comparable for the suffix
order with a word in $U$. This shows that $U$ is an $F(yz)^{-1}$-maximal suffix code.
\end{proof}
\begin{proposition}
The set $V$ is a $(yz)^{-1}F$-maximal prefix code.
\end{proposition}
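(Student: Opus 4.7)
My plan is to mirror the argument just given for $U$, exchanging left for right. Denote by $\eta$ the unlabeled vertex of the Case 6 type graph, which is the length-$n$ suffix of $yz$; recall that in this type graph the only edges leaving $\eta$ are the two edges to $x$ labeled $w$ and $t$, and the only edge arriving at $\eta$ is the one from $y$ labeled $z$. I would then proceed in three steps.

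First, $V\subset (yz)^{-1}F$ is immediate: reading along the paths from $y$ to $x$ in $G_n$ labeled $zw$ and $zt$ exhibits $yzw$ and $yzt$ as words of $F$. Second, $V$ is a prefix code. Since $\eta$ is a vertex of the type graph it is right-special in $G_n$, and its two outgoing $G_n$-edges carry distinct letters, namely the initial letters of $w$ and of $t$; hence $w$ and $t$ are prefix-incomparable.

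The main step, and the only one where some structural input is needed, is $(yz)^{-1}F$-maximality. Take $r\in A^+$ with $yzr\in F$ and consider the path in $G_n$ starting at $\eta$ and labeled $r$. Its first letter is the initial letter of $w$ or of $t$; say the former, so $r$ begins along the $w$-path from $\eta$ to $x$. Every internal vertex of this path is non-right-special in $G_n$, since otherwise it would itself be a vertex of the type graph, contradicting the fact that the only vertices of the Case 6 type graph are $x$, $y$, and $\eta$. Therefore each successive step of $r$ along the $w$-path is forced: either $r$ ends before reaching $x$, in which case $r$ is a prefix of $w$, or $r$ consumes $w$ entirely and arrives at $x$, in which case $w$ is a prefix of $r$. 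A symmetric argument handles the case in which $r$ starts with the initial letter of $t$. In every case $r$ is comparable for the prefix order with an element of $V$, which gives the required maximality. The hard part, to the extent there is one, is precisely the structural observation that the intermediate vertices on the $w$- and $t$-paths have out-degree one in $G_n$; but this is immediate from the Case 6 description of the type graph.
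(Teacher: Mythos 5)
Your proof is correct and follows essentially the same route as the paper's: the prefix code property comes from $w$ and $t$ having distinct initial letters (the two outgoing edges at the non-special vertex), and maximality comes from the fact that any $p$ with $yzp\in F$ must be prefix-comparable with $w$ or $t$. You merely spell out, via the forced path through non-right-special vertices in the Rauzy graph, the step that the paper asserts without detail.
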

\begin{proof}
Since the words $w,t$ begin with distinct letters, the set $V$
is a prefix code. Let $p$ be such that $yzp\in F$. Then $p$
is prefix comparable with $t$ or $w$. This implies the conclusion.
\end{proof}

We can now apply Proposition~\ref{PropStrongTreeCondition} to the graph 
$G_{U,V}(yz)$. Since this graph is a tree, it has three edges
and this implies that $R(x)$ has three elements and generates the
same group as $Z$. Thus Proposition~\ref{propositionReturn} holds.

\paragraph{Case 7.} Set $Z=\{uw,ut,vw,vt\}$. The proof is the same
as previously using $U=\{xu,xv\}y^{-1}$, $V=\{w,t\}$ and considering
the
graph $G_{U,V}(y)$.
 
\subsection{The infinite case}
We consider in turn the two cases.
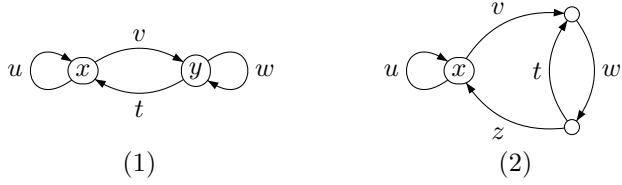
\begin{figure}[hbt]
\centering
\gasset{Nadjust=wh}
\begin{picture}(80,25)
\put(0,5){
\begin{picture}(15,15)
\node(dm)(15,7.5){$y$}
\node(gm)(0,7.5){$x$}
\drawloop[loopangle=180,loopdiam=5](gm){$u$}\drawloop[loopangle=0,loopdiam=5](dm){$w$}
\drawedge[curvedepth=3](gm,dm){$v$}\drawedge[curvedepth=3](dm,gm){$t$}
\node[Nframe=n](1)(7.5,-5){$(1)$}
\end{picture}
}
\put(50,5){
\begin{picture}(15,15)
\node(dh)(15,15){}
\node(gm)(0,7.5){$x$}\node(db)(15,0){}

\drawloop[loopangle=180,loopdiam=5](gm){$u$}
\drawedge[curvedepth=3](gm,dh){$v$}\drawedge[curvedepth=3](db,gm){$z$}
\drawedge[curvedepth=3](dh,db){$w$}\drawedge[curvedepth=3](db,dh){$t$}
\node[Nframe=n](2)(7.5,-5){$(2)$}
\end{picture}
}
\put(60,0){

}

\end{picture}
\caption{The infinite case}\label{figInfinite}
\end{figure}

\paragraph{Case 1.} Set $X=xvw^*$. In view of applying the strong tree
condition to a graph of the form $G_{U,V}(yw^n)$, we prove the following.
\begin{proposition}\label{propositionU}
Let $n\ge 0$ be such that $yw^n\in F$. Set $z=yw^n$ and $U=Xy^{-1}\cap
Fz^{-1}$.
The set $U$ is an $Fz^{-1}$-maximal suffix code.
\end{proposition}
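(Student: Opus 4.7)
The plan is to prove the two required properties of $U$ in turn: that it is a suffix code, and that it is $Fz^{-1}$-maximal. Writing a typical element of $Xy^{-1}$ as $u_k = xvw^k y^{-1}$ (defined by $u_k y = xvw^k$), membership $u_k \in U$ amounts to the additional condition $xvw^{k+n} \in F$; geometrically, this says that the Rauzy-graph path starting at $x$, taking the $v$-edge and then traversing the $w$-loop at $y$ exactly $k+n$ times, actually lifts to a word of $F$.

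First I would show that $U$ is a suffix code. Suppose for contradiction that $u_k$ is a proper suffix of $u_{k'}$; appending $y$ yields $u_k y = xvw^k$ as a proper suffix of $u_{k'} y = xvw^{k'}$. Since $xvw^k$ begins with $x$, this produces a second length-$|y|$ occurrence of $x$ inside $xvw^{k'}$ at a strictly positive position. But in the infinite case~(1) Rauzy graph the path of $xvw^{k'}$ visits the special vertex $x$ only at its origin: after the $v$-edge the path is trapped at $y$ by the $w$-loop, and the internal vertices on $v$ and on the $w$-loop are non-special, hence different from $x$. This contradiction shows that $U$ is a suffix code.

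For maximality, take any $s$ with $sz \in F$ and examine the path of $sy$ in the Rauzy graph, which ends at $y$. I would distinguish according to whether this path uses the $v$-edge from $x$ to $y$. If it does, take its last occurrence: after it, the path can only follow the $w$-loop (a $t$-edge would revisit $x$, contradicting the choice of the last occurrence), say $k'$ times. The vertex immediately preceding that $v$-edge is $x$, so $sy = s'' \cdot xvw^{k'}$ for some prefix $s''$, and stripping the final $y$ gives $s = s'' \cdot u_{k'}$; moreover $sz = s'' \cdot xvw^{k'+n} \in F$ forces $xvw^{k'+n} \in F$, so $u_{k'} \in U$, and $u_{k'}$ is a suffix of $s$. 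Otherwise the path of $sy$ consists entirely of $w$-loops at $y$, so $sy = yw^m$ for some $m \ge 0$. Writing $yw = \sigma y$ with $|\sigma| = |w|$, an easy induction gives $sy = \sigma^m y$, so $s = \sigma^m$; dually $xv = \alpha y$ with $|\alpha| = |v|$ yields $xvw^m = \alpha \sigma^m y$, hence $u_m = \alpha \sigma^m$ and $s$ is a suffix of $u_m$. Finally $u_m \in U$ because $yw^{m+n} = sz \in F$ allows one to prepend the $v$-edge to obtain the legal Rauzy path $xvw^{m+n}$, which, in a uniformly recurrent set, lifts to a factor of $F$.

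The main obstacle I anticipate is the second subcase of maximality: one needs the word-combinatorial identity $yw^m = \sigma^m y$ to identify the tail $\sigma^m$ of $u_m$, and one needs the fact that paths in the Rauzy graph of a uniformly recurrent set lift to factors of $F$ in order to deduce $xvw^{m+n} \in F$ from $yw^{m+n} \in F$.
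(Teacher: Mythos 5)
Your suffix-code argument and the first maximality case are sound and essentially coincide with the paper's (one occurrence of $x$ in each word of $X=xvw^*$, hence $X$, $Xy^{-1}$ and $U$ are suffix codes; and when the path of $sy$ passes through $x$, the word $u_{k'}$ you exhibit is a suffix of $s$ and $u_{k'}z$ is a factor of $sz\in F$, so $u_{k'}\in U$). The genuine gap is the last step of your second maximality case: to get $u_m\in U$ you need $xvw^{m+n}\in F$, and you justify it by asserting that a legal path in the Rauzy graph of a uniformly recurrent set ``lifts to a factor of $F$.'' That principle is false: edges of $G_N$ only certify factors of length $N+1$, and in general most paths do not spell factors (the number of paths through a graph with two cycles grows exponentially while the complexity of these sets is linear; the paper's own Chacon example is a uniformly recurrent set where Rauzy-graph paths cannot all be realized). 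So prepending the $v$-edge to the realizable word $yw^{m+n}$ does not by itself put $xvw^{m+n}$ in $F$, and as written the proof of maximality is incomplete precisely at the point where $Fz^{-1}$-membership must be checked.

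The step can be repaired, and the repair is essentially the paper's one-line argument: since $F$ is uniformly recurrent and not periodic, left-extend $sz=yw^{m+n}$ inside $F$ until an occurrence of $x$ appears; taking the last such occurrence, the suffix it starts is the label of a path from $x$ to the final $y$ avoiding $x$, hence of the form $xvw^{K}$ with $K\ge m+n$ (the $w$-loop part of $sz$ contains no occurrence of $x$, its vertices being $y$ and non-special vertices). Factoriality then gives $xvw^{m+n}\in F$, i.e. $u_m\in U$; alternatively you may skip $u_m$ altogether and observe that $s=\sigma^m$ is a suffix of $u_{K-n}\in U$, which is all that maximality requires. (A smaller point: your dichotomy tacitly assumes $s$ long enough that its path either contains the whole $v$-edge or consists of complete turns of the $w$-loop; for short $s$ one only gets that $s$ is a suffix of some $u_k$, and the same membership issue, handled by the same repair, reappears.) The paper phrases all of this more compactly as ``$sy$ is suffix comparable with a word beginning with $x$ and thus with a word in $xvw^*$,'' leaving the $Fz^{-1}$-membership implicit, whereas you tried to make it explicit but with an invalid lifting argument.
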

\begin{proof}

A word in $X$ has only one occurrence of $x$ which its
prefix
of length $n$. This implies that $X$ is a suffix code. 
Since any word in $X$ has $y$ as a suffix, the set $Xy^{-1}$
is also a suffix code and therefore also $U=Xy^{-1}\cap
Fz^{-1}$.

Next, let
$s$ be such that $sz\in F$. Then $sy$ is suffix comparable with
a word beginning with $x$ and thus with
a word in $xvw^*$. Thus $s$ is suffix comparable with a word in $Xy^{-1}$.
\end{proof}

Set $Y=w^*t$. Symmetrically, we have
\begin{proposition}\label{propositionV}
Let $n\ge 0$ be such that $yw^n\in F$. Set $z=yw^n$ and $V=Y\cap
z^{-1}F$.
The set $V$ is a $z^{-1}F$-maximal prefix code.
\end{proposition}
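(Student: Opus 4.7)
The proof follows the pattern of Proposition~\ref{propositionU}, with left and right exchanged. First, $Y=w^*t$ is itself a prefix code: any two distinct elements $w^it$ and $w^jt$ with $i<j$ disagree at position $i$ (where the former has $t$ and the latter has $w$), so neither is a prefix of the other. Hence $V=Y\cap z^{-1}F\subseteq Y$ is also a prefix code.

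For maximality, I would take any $p$ with $zp\in F$ and show that $p$ is prefix-comparable with some element of $V$. The key structural fact comes from the type graph in Figure~\ref{figInfinite}(1): at the vertex $y$, the only outgoing edges are the loop labeled $w$ and the edge $t$ leading back to $x$. Since $z=yw^n$ ends at $y$, every continuation of $z$ inside $F$ either remains on the $w$-loop (and is thus a power $w^i$) or eventually leaves it via $t$ (and then has $w^it$ as a prefix for some $i\ge 0$).

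In the second case $w^it\in V$, because $zw^it$ is itself a prefix of $zp\in F$, and $p$ has this element of $V$ as a prefix. In the first case $p=w^i$, and I would invoke uniform recurrence of $F$ to rule out $w^j\in z^{-1}F$ for all $j$: extending $zw^i$ to the right in $F$ (possible since $F$ is biessential), the path in the Rauzy graph must eventually leave $y$ via $t$, yielding some $j\ge i$ with $zw^jt\in F$, so that $w^jt\in V$ has $p=w^i$ as a prefix. The main delicate step is precisely this appeal to uniform recurrence, which prevents the $w$-loop from being followed indefinitely; everything else is an immediate transcription of the argument given for Proposition~\ref{propositionU}.
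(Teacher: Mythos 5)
Your overall strategy matches the paper's: show $Y$ is a prefix code, then use the shape of the Rauzy graph at $y$ together with recurrence to show that every $p$ with $zp\in F$ is prefix-comparable with an element of $V$. Two points, however, need repair. The smaller one is in the prefix-code step: $w$ and $t$ are words (labels of paths in the Rauzy graph), not letters, so ``disagree at position $i$'' is not automatic --- if $t$ were a prefix of $w$, then $w^it$ would indeed be a prefix of $w^jt$ for $i<j$. What saves the argument, and what the paper states explicitly, is that $w$ and $t$ begin with distinct letters, since they label the two distinct edges leaving the right-special vertex $y$; you should say this.

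The more substantive issue is that your dichotomy for $p\in z^{-1}F$ (``$p$ is a power of $w$'' or ``$p$ has some $w^it$ as a prefix'') is not exhaustive: $p$ may stop strictly inside an occurrence of $w$ or of $t$, i.e.\ $p=w^iw'$ or $p=w^it'$ with $w'$, $t'$ proper nonempty prefixes. Such $p$ fall in neither of your cases, and in the first of them your loop-escape argument must in addition produce some $j\ge i+1$, because $w^it$ is not prefix-comparable with $w^iw'$. This is repairable with the same tools (continuations through the non-special vertices inside a block are forced), but the paper's single argument avoids the case split entirely: since $F$ is recurrent, extend $zp$ on the right to a word of $F$ ending in $x$; the label of the path from the end of $z$ (at vertex $y$) to the first subsequent visit of $x$ lies in $w^*t=Y$, lies in $z^{-1}F$ because $F$ is factorial, and is prefix-comparable with $p$ since both are labels of prefixes of the same extension. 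Note that this uses only recurrence, so the appeal to uniform recurrence --- which you in any case leave as a sketch (``I would invoke'') --- is not needed, although it can be made to work: if $zw^jt\notin F$ for all $j$, then $zw^j\in F$ for all $j$, and sufficiently long factors $w^j$ would not contain $x$, contradicting uniform recurrence.
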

\begin{proof}
Since $w,t$ have distinct initial letters, the set $\{w,t\}$ is a
prefix code and consequently $Y$ is a prefix code. Thus $V$
is a prefix code.

If $zp\in F$, then $zp$ is prefix comparable with a word ending in $x$
and
thus with a word in $Y$.
\end{proof}

\begin{proposition}\label{proposition2}
 We have $R_F(x)=\{u,vw^nt,vw^{n+1}t\}$
for some $n\ge 0$.
\end{proposition}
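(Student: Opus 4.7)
The plan is to apply the strong tree condition of Proposition~\ref{PropStrongTreeCondition} (in the weaker form noted after its statement) to the generalized extension graph $G_{U,V}(z)$ for $z=yw^n$, with varying $n\ge 0$; the hypotheses hold by Propositions~\ref{propositionU} and~\ref{propositionV}. Write $S=\{k\ge 0 : xvw^kt\in F\}$. First one checks $S\ne\emptyset$: otherwise $R_F(x)=\{u\}$, every occurrence of $x$ in $F$ is immediately followed by $u$, and $F$ lies in the factors of $(xu)^\omega$, contradicting the non-periodicity of tree sets on $\ge 2$ letters.

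Let $n=\min S$. I would then describe $G_{U,V}(z)$ for $z=yw^n$ explicitly: the vertices in $U$ are indexed by $a\in[0,K_1]$ with $xvw^{a+n}\in F$ (an interval, by factoriality), the vertices in $V$ by $J=\{j\ge 0:yw^{n+j}t\in F\}$, and there is an edge $u_a$--$v_j$ exactly when $a+n+j\in S$. Each $s\in S$ thus contributes the anti-diagonal $\{(a,j):a+j=s-n\}$ of edges to this bipartite graph.

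The three tree properties then become arithmetic constraints on $S$. Connectedness forces $K_1=\max(S)-n$, since any $u_a$ with $a>\max(S)-n$ is isolated. A direct edge count shows that if $|S|=2$ and $S=\{n,n+d\}$, then $|E|=d+2$ while $|U|+|V|-1=2d+1$, so $d=1$ is forced.

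The main obstacle will be to rule out $|S|\ne 2$, because a single application of Proposition~\ref{PropStrongTreeCondition} still admits configurations such as $S-n=\{0,1,3\}$. I would finish by invoking the tree condition at a second value of $n$: if $S$ has a gap, choosing $z'=yw^{n'}$ with $n'\in(n,\max S)\setminus S$ disconnects $G_{U,V}(z')$; if $S$ is an interval of length $\ge 3$, then $G_{U,V}(z)$ itself already contains a $4$-cycle on some $\{u_a,v_j,u_{a'},v_{j'}\}$; and if $|S|=1$, the graph at $z=y$ forces $T:=\{k:yw^kt\in F\}=\{0\}$, contradicting that the loop $w$ at $y$ together with uniform recurrence gives $\max T\ge 1$. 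These cases excluded, $R_F(x)=\{u,vw^nt,vw^{n+1}t\}$.
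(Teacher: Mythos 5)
Your overall plan---turning Proposition~\ref{PropStrongTreeCondition}, applied to the graphs $G_{U,V}(yw^m)$ via Propositions~\ref{propositionU} and~\ref{propositionV}, into arithmetic constraints on $S$---is the paper's strategy, and your treatment of $S=\emptyset$, of $|S|=1$, and of three consecutive exponents (the $4$-cycle) is essentially sound. The recurring problem is that you treat the vertex sets of these graphs as if they were determined by $S$: the right-hand vertices of $G_{U,V}(yw^m)$ are the $w^jt$ with $yw^{m+j}t\in F$, and this can hold with $m+j\notin S$ (from $yw^kt\in F$ one can only deduce $xvw^{k'}t\in F$ for \emph{some} $k'\ge k$). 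This already makes the count in your $|S|=2$ case unjustified (you assume $|V|=d+1$); that step is repairable, since with $|V|=|J|$ one gets $|E|=1+|J|$ edges against $(d+1)+|J|$ vertices, forcing $d=1$ anyway.

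The same issue is fatal, however, for your key claim that a gap in $S$ disconnects $G_{U,V}(yw^{n'})$ for $n'\in(\min S,\max S)\setminus S$. Take $S-n=\{0,2,3\}$ and $n'=n+1$, and suppose $yw^{n+1}t\in F$ (nothing you have established excludes this). Then at $z'=yw^{n'}$ the graph has vertices $u_0,u_1,u_2$ and $v_0,v_1,v_2$ and exactly the five edges $u_0v_1$, $u_1v_0$, $u_0v_2$, $u_1v_1$, $u_2v_0$: it is connected and acyclic, so the tree condition gives no contradiction at the gap. In that configuration a contradiction does exist, but it is a $6$-cycle ($u_0v_2u_1v_1u_2v_0$) in the graph based at $yw^{n}$, which your case analysis never inspects once $S$ has a gap. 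The paper's proof is organized precisely to avoid this trap: it bases the graph at $yw^m$ with $m$ the \emph{second largest} exponent, where all edges are governed by the two known top exponents, so disconnection is robust to unknown extra vertices and the two largest exponents are forced to be consecutive; it then excludes every smaller exponent by an \emph{acyclicity} argument at base $y$, since the two top anti-diagonals already connect all vertices by a zigzag path and any further exponent would add an edge creating a cycle. Your sketch has no counterpart of this second mechanism, so the case ``$S$ has a gap'' (and more generally $|S|\ge 3$ without three consecutive elements) is not proved as written.
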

\begin{proof}
Since $F$ is uniformly recurrent, the set $R_F(x)$ is finite.
Moreover, there is an $n\ge 1$ such that $vw^nt\in R_F(x)$. Indeed
otherwise
the word $w$ would not be a factor of $\Gamma(x)$.

Let us show that the set $R_F(x)\cap vw^*t$ cannot be reduced to one
element. 
Indeed, assume that $R_F(x)\cap vw^*t$ is reduced to the word $vw^nt$.
As we have just seen, we have $n\ge 1$.
Consider the graph $G_{U,V}(y)$ with $U=Xy^{-1}\cap Fy^{-1}$ and $V=Y\cap
y^{-1}F$. By
Proposition~\ref{propositionU}, the set $U$ is an
$Fy^{-1}$-maximal
suffix code. By Proposition~\ref{propositionV}, $V$ is an
$y^{-1}F$-maximal prefix code. Since $F$ is a tree set, by
Proposition~\ref{PropStrongTreeCondition}, the
generalized extension
graph graph $G=G_{U,V}(y)$ is a
tree. 
\begin{figure}[hbt]
\centering
\gasset{Nadjust=wh,AHnb=0}
\begin{picture}(30,20)
\node(xv)(0,20){$xvy^{-1}$}\node(t)(40,20){$t$}
\node[Nframe=n](g)(0,10){$\vdots$}\node[Nframe=n](d)(40,10){$\vdots$}
\node(xvw)(0,0){$xvw^{n}y^{-1}$}\node(wt)(40,0){$w^{n}t$}

\drawedge(xv,wt){}\drawedge(xvw,t){}
\end{picture}
\caption{The graph $G_{U,V}(y)$ when $R_F(x)=\{vw^nt\}$.}\label{figureGraph}
\end{figure}
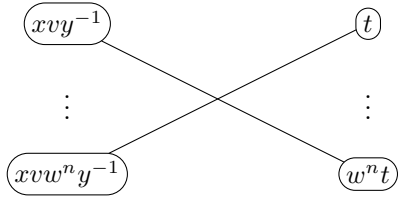
But, since $\{xvy^{-1},\ldots,xvw^{n}y^{-1}\}\subset U$ and
$\{t,\ldots,w^{n}t\}\subset V$,
and since $n\ge 1$,  the graph $G$ has at least four vertices (see
Figure~\ref{figureGraph}).  Since it has only two edges,
 it is not connected, a contradiction.

We have thus proved that $R_F(x)\cap vw^*t$ has at least two
elements. Set  $E=\{i\ge 0\mid vw^it\in
R_F(x)\}$. Let
$m,n$ with $m<n$ be the two largest elements of the set $E$.
We show that $n=m+1$. Set $k=n-m$. Assume by contradiction that $k\ge 2$.
Set  $z=yw^m$ and consider the graph $G=G_{U,V}(z)$ with $U=Xy^{-1}\cap Fz^{-1}$
and $V=Y\cap z^{-1}F$.
\begin{figure}[hbt]
\centering
\gasset{Nadjust=wh,AHnb=0}
\begin{picture}(30,20)
\node(xv)(0,20){$xvy^{-1}$}\node(t)(40,20){$t$}
\node[Nframe=n](g)(0,10){$\vdots$}\node[Nframe=n](d)(40,10){$\vdots$}
\node(xvw)(0,0){$xvw^ky^{-1}$}\node(wt)(40,0){$w^kt$}

\drawedge(xv,t){}
\drawedge[ELpos=30](xvw,t){}\drawedge[ELpos=70](xv,wt){}
\end{picture}
\caption{The graph $G_{U,V}(z)$ when $vw^nt,vw^mt\in R_F(x)$.}\label{figureGraph2}
\end{figure}
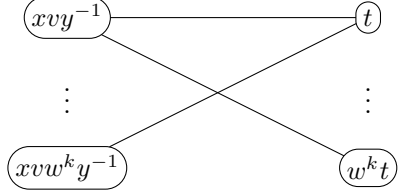
By Proposition~\ref{propositionU}, the set $U$ is an $Fz^{-1}$-maximal
suffix code and by Proposition~\ref{propositionV}, the set $V$ is a
$z^{-1}F$-maximal prefix code. Thus $G$ is a tree.
We have $\{xvy^{-1},\ldots,xvw^{k}y^{-1}\}\subset U$ and
$\{t,\ldots,w^kt\}\subset V$.
The vertices in the set $\{xvy^{-1},xvw^{k}y^{-1}\}\subset U$ are only connected to
vertices in the set $\{t,w^kt\}\subset V$
(see Figure~\ref{figureGraph2}). Since $k\ge 2$, this implies that
 $G$ is not connected, a contradiction.

We thus have proved that the two maximal elements of the set $E$ are
$n,n+1$. We finally show that $E=\{n,n+1\}$.
Consider indeed the graph $G=G_{U,V}(y)$ with $U=Xy^{-1}\cap Fy^{-1}$
and
$V=Y\cap y^{-1}F$. Any vertex $u_i=xvw^iy^{-1}\in U$ with $0<i\le n$ is connected to its
two neighbours $u_{i-1}$ and $u_{i+1}$ by a path of length $2$. Indeed, there are edges
from $u_{i-1}$ and $u_i$ to $w^{n-i+1}t$ and edges  from $u_i$ and $u_{i+1}$ to
$v_{n-i}w^{n-i}t$ (see Figure~\ref{figureGraph3}). Similarly, there is an edge from any vertex $v_i=w^it\in V$
with $0<i\le n$ to its two neighbours.
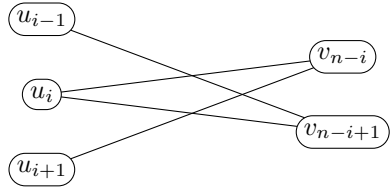
\begin{figure}[hbt]
\centering
\gasset{Nadjust=wh,AHnb=0}
\begin{picture}(50,40)
\node(ui+1)(0,0){$u_{i+1}$}\node(vn-i+1)(40,5){$v_{n-i+1}$}
\node(ui)(0,10){$u_{i}$}\node(vn-i)(40,15){$v_{n-i}$}
\node(ui-1)(0,20){$u_{i-1}$}

\drawedge(ui+1,vn-i){}\drawedge(ui,vn-i){}\drawedge(ui,vn-i+1){}
\drawedge(ui-1,vn-i+1){}
\end{picture}
\caption{The graph $G_{U,V}(y)$.}\label{figureGraph3}
\end{figure}
This shows that the graph $G$ is connected. Therefore, the set $E$
cannot
contain any other element since it would create a cycle in the graph $G$.

\end{proof}
Set  $Z=u\cup vw^*t$.
\begin{corollary}\label{corollary1}
The group generated by $R_F(x)$ contains $Z$.
\end{corollary}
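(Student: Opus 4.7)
The plan is to deduce this directly from Proposition~\ref{proposition2} by a short free-group computation, so no further use of the tree condition is needed. By Proposition~\ref{proposition2}, $R_F(x)=\{u,\,vw^nt,\,vw^{n+1}t\}$ for some $n\ge 0$. The element $u$ is trivially in the group generated by $R_F(x)$, so it remains to show that $vw^jt$ belongs to this group for every $j\ge 0$.

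The key step is to form the quotient of the two ``consecutive'' generators. In the free group on $A$ one has
\begin{displaymath}
(vw^nt)^{-1}(vw^{n+1}t)=t^{-1}w^{-n}v^{-1}vw^{n+1}t=t^{-1}wt,
\end{displaymath}
so the conjugate $t^{-1}wt$, and hence $(t^{-1}wt)^k=t^{-1}w^kt$ for every $k\in\mathbb{Z}$, lies in the group generated by $R_F(x)$. Then for any $j\ge 0$, setting $k=j-n$, one obtains
\begin{displaymath}
(vw^nt)(t^{-1}wt)^{j-n}=vw^nt\cdot t^{-1}w^{j-n}t=vw^jt,
\end{displaymath}
which shows $vw^jt$ belongs to the group. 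Combining with the trivial membership of $u$, the whole of $Z=u\cup vw^*t$ lies in the group generated by $R_F(x)$.

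There is no serious obstacle here: the statement is essentially an algebraic consequence of the precise form of $R_F(x)$ supplied by Proposition~\ref{proposition2}, and the only thing to notice is that $vw^nt$ and $vw^{n+1}t$ differ on the right of $x$ by a single extra $w$, which after cancellation produces the conjugate $t^{-1}wt$ that generates the ``$w$-direction'' in $Z$.
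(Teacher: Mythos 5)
Your proof is correct and follows essentially the same route as the paper: both deduce the corollary purely algebraically from Proposition~\ref{proposition2}, using free-group cancellation between the two generators $vw^nt$ and $vw^{n+1}t$ to reach every $vw^jt$. The paper phrases this via the identities $vw^{i-1}t=vw^{i}t(vw^{i+1}t)^{-1}vw^{i}t$ and $vw^{i+2}t=vw^{i+1}t(vw^{i}t)^{-1}vw^{i+1}t$, which is just an unfactored form of your observation that $(vw^nt)^{-1}(vw^{n+1}t)=t^{-1}wt$ generates the shift in the exponent.
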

\begin{proof}
 This results from Proposition~\ref{proposition2} and the fact that for all $i\ge 1$
\begin{displaymath}
vw^{i-1}t=vw^{i}t(vw^{i+1}t)^{-1}vw^{i}t,\quad vw^{i+2}t=vw^{i+1}t(vw^{i}t)^{-1}vw^{i+1}t
\end{displaymath}
\end{proof}
This implies that Proposition~\ref{propositionReturn} is true in this case.

\paragraph{Case 2.} The second case can be reduced to the previous one
considering the equivalent graph represented in
Figure~\ref{graphCase2}.
This graph is equivalent in the sense that the set of labels of paths
of first return to $x$ is the same.
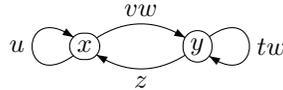
\begin{figure}[hbt]
\centering\gasset{Nadjust=wh}

\begin{picture}(15,15)
\node(dm)(15,7.5){$y$}
\node(gm)(0,7.5){$x$}
\drawloop[loopangle=180,loopdiam=5](gm){$u$}\drawloop[loopangle=0,loopdiam=5](dm){$tw$}
\drawedge[curvedepth=3](gm,dm){$vw$}\drawedge[curvedepth=3](dm,gm){$z$}

\end{picture}
\caption{An equivalent graph for case 2.}\label{graphCase2}
\end{figure}
\subsection{Examples}

Set $u=aa,v=ab,w=ba$ and $B=\{u,v,w\}$
Let $x$ be the infinite word on the alphabet $B$ obtained by decoding the Fibonacci words
in blocks of length $2$.
Let $F$ be the set of factors of $x$. There are bispecial words of
length $1$, $2$, $3$ and $5$ (but none of length $4$).
The graphs $G_1$ and $G_5$ illustrate the infinite case while $G_2$
and $G_3$ illustrate the finite case.

The Rauzy graphs $G_1$ and $G_5$
are
 represented in Figure~\ref{figureExample}. They correspond both to
 the infinite case (case 1).
\begin{figure}[hbt]
\centering
\gasset{Nadjust=wh}
\begin{picture}(100,15)(0,-5)
\put(0,0){
\begin{picture}(50,15)
\node(v)(0,0){$v$}\node(w)(20,0){$w$}

\drawloop[loopangle=180](v){$v$}\drawloop[loopangle=0](w){$w$}
\drawedge[curvedepth=5](v,w){$uw$}\drawedge[curvedepth=5](w,v){$v$}
\end{picture}
}
\put(60,0){
\begin{picture}(50,15)
\node(x)(0,0){$wvvuw$}\node(y)(20,0){$vuwwv$}

\drawloop[loopangle=180](x){$vvuw$}\drawloop[loopangle=0](y){$uwwv$}
\drawedge[curvedepth=5](x,y){$wv$}\drawedge[curvedepth=5](y,x){$vuw$}
\end{picture}
}
\end{picture}
\caption{The Rauzy graphs $G_1$ and $G_5$.}\label{figureExample}
\end{figure}
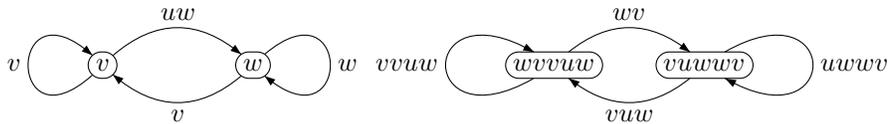
We have $R_F(v)=\{v,uwv,uwwv\}$ which corresponds to the case $n=0$ in
Proposition~\ref{proposition2}
and 
\begin{displaymath}
R(wvvuw)=\{vvuw,wv(uwwv)vuw,wv(uwwv)^2vuw\}
\end{displaymath}
 which corresponds to
the case $n=1$.

The Rauzy graphs $G_2$ and $G_3$ are represented in
Figure~\ref{figureExampleFinite}.

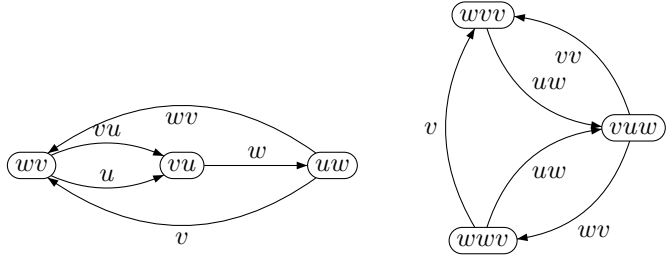
\begin{figure}[hbt]
\gasset{Nadjust=wh}\centering
\begin{picture}(100,30)
\put(0,0){
\begin{picture}(20,20)
\node(wv)(0,10){$wv$}\node(vu)(20,10){$vu$}
\node(uw)(40,10){$uw$}

\drawedge[curvedepth=3](wv,vu){$vu$}\drawedge[curvedepth=-3](wv,vu){$u$}
\drawedge(vu,uw){$w$}\drawedge[curvedepth=8](uw,wv){$v$}
\drawedge[curvedepth=-8](uw,wv){$wv$}
\end{picture}
}
\put(60,0){
\begin{picture}(20,30)
\node(vuw)(20,15){$vuw$}\node(wvv)(0,30){$wvv$}\node(wwv)(0,0){$wwv$}

\drawedge[curvedepth=-5](vuw,wvv){$vv$}\drawedge[curvedepth=5](vuw,wwv){$wv$}
\drawedge[curvedepth=5](wwv,wvv){$v$}\drawedge[curvedepth=-5](wvv,vuw){$uw$}
\drawedge[curvedepth=5,ELside=r](wwv,vuw){$uw$}
\end{picture}
}
\end{picture}
\caption{The graphs $G_2$ and $G_3$}\label{figureExampleFinite}
\end{figure}
The word $wv$ is bispecial.
It corresponds to Case $6$ of the finite case. One has 
$R(wv)=\{uwwv,vuwv,vuwwv\}$.
The word $vuw$ is bispecial and this time it is Case $5$ 
of the finite case which is represented.
\bibliographystyle{plain}
\bibliography{note}
\end{document}